\documentclass[10pt]{article}
\usepackage[preprint]{tmlr}

\usepackage{amsmath,amsfonts,bm}

\def\eqref#1{equation~\ref{#1}}

\def\1{\bm{1}}

\DeclareMathAlphabet{\mathsfit}{\encodingdefault}{\sfdefault}{m}{sl}
\SetMathAlphabet{\mathsfit}{bold}{\encodingdefault}{\sfdefault}{bx}{n}

\DeclareMathOperator*{\argmin}{arg\,min}

\usepackage{amsmath,amssymb,amsthm,mathtools,mathrsfs}
\usepackage{bm}
\usepackage{booktabs}
\usepackage{enumitem}
\usepackage{microtype}
\usepackage{url}
\usepackage{hyperref}
\usepackage[nameinlink,noabbrev]{cleveref}
\hypersetup{hidelinks}

\theoremstyle{plain}
\newtheorem{theorem}{Theorem}
\newtheorem{proposition}{Proposition}
\newtheorem{lemma}{Lemma}
\newtheorem{corollary}{Corollary}
\theoremstyle{definition}
\newtheorem{definition}{Definition}
\newtheorem{assumption}{Assumption}
\theoremstyle{remark}

\newcommand{\SPD}{\mathbb{S}_{++}}

\newcommand{\tr}{\operatorname{tr}}

\newcommand{\rank}{\operatorname{rank}}

\newcommand{\dd}{\mathrm{d}}

\newcommand{\argminop}{\operatorname*{arg\,min}}

\newcommand{\ip}[2]{\left\langle #1,#2\right\rangle}
\newcommand{\norm}[1]{\left\lVert #1\right\rVert}

\newcommand{\fro}[1]{\left\lVert #1\right\rVert_F}
\newcommand{\pospart}[1]{\left[#1\right]_+}

\newcommand{\Xspace}{\mathcal X}
\newcommand{\Yspace}{\mathcal Y}
\newcommand{\Zspace}{\mathcal Z}
\newcommand{\Uspace}{\mathcal U}
\newcommand{\Sigspace}{\mathcal S}
\newcommand{\Bact}{\mathscr B_A}
\newcommand{\Pdet}{\mathscr P_d}
\newcommand{\Pfiber}{\mathscr F_A}
\newcommand{\dai}{d_{\mathrm{AI}}}

\newcommand{\Schur}{\operatorname{Schur}}

\newcommand{\ind}{\iota}

\crefname{assumption}{Assumption}{Assumptions}
\crefname{definition}{Definition}{Definitions}
\crefname{theorem}{Theorem}{Theorems}
\crefname{proposition}{Proposition}{Propositions}
\crefname{lemma}{Lemma}{Lemmas}
\crefname{corollary}{Corollary}{Corollaries}

\title{Optimization Geometrodynamics:\\
Variational Reduction and Interaction Curvature}

\author{\name Zavier Li \email zavierli888@gmail.com\\
      \addr Xidian University\\
      \addr Xi'an, China}

\def\month{07}
\def\year{2026}
\def\openreview{\url{}}
\newcommand{\informationgeometrycite}{%
  \citep{li2026informationgeometry}}

\begin{document}
\maketitle

\begin{abstract}
Adaptive optimizers carry hidden states that change how visible gradients
become parameter motion.  We develop optimization geometrodynamics as a
variational theory of this hidden geometry.  Infimal pushforward eliminates
all hidden states realizing the same action and composes across optimizer
hierarchies.  Smooth nondegeneracy gives the hidden susceptibility and the
Schur-complement curvature seen after relaxation.  In particular, affine
pre-reduction perturbations induce the negative-semidefinite interaction
curvature \(-G^*H^{-1}G\), whose mixed entries integrate to finite mechanism
contrasts.

The main realization is the determinant-one affine-invariant SPD action map
\(P\mapsto PA\).  We prove a global analytic bundle with closed totally
geodesic fibers and a unique analytic nearest-controller section.  More
importantly, a general strongly-convex fiber theorem and its explicit action
residual yield a solver that converges globally and linearly from every
feasible initializer, with
nonasymptotic value, controller-distance, and residual bounds and observable
posterior stopping certificates.  The solver remains globally convergent
conditional on supplied rigorous residual-error and radius majorants.  Its dense spectral kernel is
restricted to an active subspace of dimension \(r\le2m\), giving an explicit
spectral-arithmetic operation bound after one-time input reduction.  For nested
shape-normalized quadratic actions, canonical multi-secant projections obey
an exact CAT(0) Pythagorean decrease and recover the determinant-one inverse
Hessian shape at the sharp rank threshold \(d-1\); this conclusion requires a
known scalar gauge.  Together these results turn the action bundle from a
geometric characterization into an exact iteration with posterior certificates
and a finite-identification theory.
\end{abstract}

\section{Introduction}
\label{sec:introduction}

Adaptive optimizers use hidden positive operators---moments, curvature
surrogates, Kronecker factors, or tensor preconditioners---to turn visible
gradients into motion \citep{duchi2011adagrad,kingma2014adam,
martens2015kfac,gupta2018shampoo,morwani2024shampoo,jordan2024muon,
essentialai2025muon,crawshaw2025muonvariants}.  Writing
\(\dot\theta=-\operatorname{grad}_{g_t}f\) does not select which hidden metric
should represent an observed action when many metrics realize it.  We study
that ambiguity as a variational reduction problem.

For hidden state \(x\), visible action \(y=\pi(x)\), and deformation energy
\(E\), the infimal pushforward
\[
  (\pi_!E)(y)=\inf_{\pi(x)=y}E(x)
\]
eliminates hidden realizations and composes across nested maps.  Convex-fiber
regularity selects a canonical lift; smooth nondegeneracy gives
\[
  D\sigma_\star=-E_{\sigma\sigma}^{-1}E_{\sigma y},
  \qquad
  D^2(\pi_!E)
  =E_{yy}-E_{y\sigma}E_{\sigma\sigma}^{-1}E_{\sigma y}.
\]
When mechanism amplitudes enter affinely before reduction, the corresponding
interaction curvature is
\[
  D^2_{uu}(\pi_!E)=-G^*H^{-1}G\preceq0,
\]
and its mixed entries integrate to finite contrasts for the specified
mechanism protocol.

Our main realization is the determinant-one SPD action map
\[
  \pi_A(P)=PA.
\]
For full-column-rank \(A\), every admissible fiber has a global analytic
parameterization by a smaller determinant-one SPD factor and is closed and
totally geodesic under AIRM.  Projection of the identity therefore gives a
unique analytic canonical controller.  A general strongly-convex fiber
engine connects this selection to computation, while the action bundle makes
its logarithmic residual explicit.  A sublevel first-return argument yields
the sharp radius majorant \(L_0=\psi(D_0/\sqrt2)\), nonasymptotic iteration
bounds obtained by exact inversion of the linear rate, and posterior
controller and objective certificates.  Relative matrix-function error
remains globally convergent when a numerical routine supplies rigorous error
and radius majorants.

The solver has an exact active realization on
\(\operatorname{span}(\operatorname{col}A,\operatorname{col}B)\), of dimension
\(r\le2m\).  It replaces the full spectral kernel by \(r\times r\) operations
and gives a strict dimension reduction when \(r<d\).  Nested shape-normalized
actions then satisfy a CAT(0)
Pythagorean law and identify \(c_HH^{-1}\) at the sharp rank \(d-1\), provided
the scalar gauge \(c_H=(\det H)^{1/d}\) is known.

\paragraph{Contributions.}
\begin{enumerate}[leftmargin=*,itemsep=0.2em]
  \item \textbf{Reduction and interaction curvature.}
  Associative hidden-state elimination yields the canonical response and Schur
  effective Hessian; action specialization identifies the shared inverse
  vertical Hessian behind interaction and controller susceptibility.

  \item \textbf{Global action geometry and posterior-certified computation.}
  We trivialize \(P\mapsto PA\), prove fiber total geodesy and analytic
  selection, and derive a global exact solver with sharp sublevel and
  posterior constants, plus a conditional inexact analogue.

  \item \textbf{Active spectral cost and finite identification.}
  The active core has order \(r\le2m\), costs \(O(r^3)\) per step after
  \(O(dm^2+r^3)\) preprocessing, and applies in \(O(dr+r^2)\).  Nested
  projections recover determinant-one inverse shape after
  \(\lceil(d-1)/b\rceil\) independent width-\(b\) rounds.

\end{enumerate}

The general reduction, Schur, and strongly-convex descent ingredients are
classical.  The action-specific contribution is their global closure through
the analytic bundle, explicit residual, active realization, conditional
error propagation, and sharp normalized recovery threshold.  The action
specialization of the interaction tensor identifies the same vertical
Hessian that governs canonical response, so these results form one
variational-reduction master chain.

\paragraph{Organization.}
\Cref{sec:reduction,sec:interaction} develop reduction and interaction;
\cref{sec:action-bundle,sec:certified-solver} give the action geometry and
solver; \cref{sec:multisecant-recovery} gives finite recovery.  Appendices
contain complete proofs and a closest-work comparison.

\section{Related Work}
\label{sec:related-work}

\paragraph{Adaptive geometry and variational reduction.}
Natural gradient, K-FAC, AdaGrad, Adam, Shampoo, SOAP, and related methods use
Fisher, moment, Kronecker, or tensor states as changing positive update
operators \citep{amari1998natural,martens2015kfac,duchi2011adagrad,
kingma2014adam,gupta2018shampoo,morwani2024shampoo,vyas2025soap}.
Riemannian optimization provides the coordinate-free gradient language
\citep{absil2008optimization,boumal2023introduction}.  Partial minimization,
marginal functions, envelope sensitivity, and variable projection are
classical \citep{rockafellar1970convex,rockafellar1998variational,
bonnans2000perturbation,golub1973variableprojection}; we do not claim the
Schur-complement identity itself as new.
Global first-order complexity for geodesically convex objectives on Hadamard
manifolds is established in \citet{zhang2016firstorder}; metric projection and
convex optimization in Hadamard spaces are treated systematically by
\citet{bacak2014convex}.  We therefore do not claim the strongly-convex
gradient-descent template in \cref{thm:canonical-fiber-engine}.  Its role is
to expose a current-sublevel first-return argument in exactly the form needed
by the action problem.  The action-specific claims are the global fiber
coordinates, exact logarithmic residual, sharp action-family radius majorant,
active equivalence, and posterior controller certificates.

\paragraph{AIRM approximation and totally geodesic submanifolds.}
The affine-invariant SPD geometry is classical
\citep{bhatia2007positive,pennec2006riemannian}.  AIRM approximation from
special sets, best approximation in geodesic SPD submanifolds, and general
total-geodesy criteria and projections are studied by
\citet{bhatia2014approximation}, \citet{lim2004best}, and
\citet{tumpach2024totally}; conic geodesic optimization supplies a broader
algorithmic SPD setting \citep{sra2015conic}.  Algebraic identities and
definiteness conditions for Hermitian solutions of \(AX=B\) are studied by
\citet{tian2013hermitian}; we do not claim the bare feasibility criterion or
block-Schur solution algebra as new.  Once a fiber is known to be
closed and totally geodesic, the projection theory accounts for its unique
AIRM projection.  \citet{lim2004best} additionally gives explicit formulas
for selected involutive block models, including low-dimensional block cases.
Our specialization treats the action-indexed determinant-one fiber in every
dimension and adds its global AIRM bundle, analytic section, exact
residual-gradient map, sharp current-sublevel majorant, posterior certificates,
and active spectral realization.  The conditional inexact result propagates
rigorous numerical error and radius majorants; it does not construct those
matrix-function certificates.

\paragraph{Secant updates and completion.}
Inverse-Hessian multi-secant equations have the form \(PY=S\)
\citep{schnabel1983multiple}; compatibility with positive definiteness and
modern block or grouped updates have been studied separately
\citep{passy1984secant,gao2018block,boutet2022grouped}.  Classical least-change
and variational BFGS/DFP
updates use weighted norms or log-determinant divergences
\citep{dennis1979leastchange,fletcher1991variational}; Bregman and weighted
secant extensions use related selection principles
\citep{kanamori2013bregman,gratton2015weighted}.  Bounded and sparse
quasi-Newton updates and positive-definite completion are treated in
\citet{calamai1987bounds,yamashita2007sparse,dai2014sparse,grone1984positive}.

The present action-specific contribution starts before the general projection
principle: it gives a global analytic trivialization of \(P\mapsto PA\),
identifies its determinant-one fibers and proves their total geodesy, then
derives a globally convergent residual solver and the sharp completion rank
for shape-normalized nested actions.  Classical multi-secant observations do
not include the determinant scale used by the \(d-1\) recovery theorem; the
manuscript states that scalar-information requirement explicitly.
Condition-constrained SPD approximation and preconditioning have separate literatures
\citep{tanaka2014condition,marechal2009optimizing,lu2011minimizing,
gao2023scalable,qu2024optimal,dogan2025geodesicpreconditioning}.
A theorem-level comparison is given in \cref{app:closest-comparison}.

\paragraph{Scope.}
The companion information-compression map
\(P\mapsto A^\top PA\) has different fibers
\informationgeometrycite.  The present paper is self-contained
and treats the full-SPD exact-action problem.  Structured controller families,
noisy secants, unknown scalar gauges, historical state estimation, and regret
require additional analyses.

\section{Variational Reduction of Hidden Geometry}
\label{sec:reduction}

This section isolates the operation that turns hidden optimizer geometry into
a visible action theory.  No smoothness is needed for the algebraic layer;
geometry enters when one asks for a canonical representative of each fiber.

\subsection{Infimal pushforward}

Let \(\pi:\Xspace\to\Yspace\) be an arbitrary observation map and let
\(E:\Xspace\to(-\infty,+\infty]\) be an extended energy that never takes
\(-\infty\).  We use \(\inf\varnothing=+\infty\).

\begin{definition}[Infimal pushforward]
\label{def:infimal-pushforward}
The infimal pushforward of \(E\) along \(\pi\) is
\begin{equation}
  (\pi_!E)(y)
  :=\inf_{\pi(x)=y}E(x).
  \label{eq:infimal-pushforward}
\end{equation}
Its effective domain is the set of visible actions that admit a finite-energy
hidden realization.
\end{definition}

\begin{theorem}[Exact reduction and composition]
\label{thm:algebraic-reduction}
Let \(\rho:\Yspace\to\Zspace\) be another map.  Then
\begin{equation}
  (\rho\circ\pi)_!E=\rho_!(\pi_!E).
  \label{eq:pushforward-composition}
\end{equation}
For every \(\ell:\Yspace\to(-\infty,+\infty]\) for which the sums are
defined,
\begin{equation}
  \inf_{x\in\Xspace}\{E(x)+\ell(\pi(x))\}
  =
  \inf_{y\in\Yspace}\{(\pi_!E)(y)+\ell(y)\}.
  \label{eq:task-reduction}
\end{equation}
Moreover, for a visible potential \(\varphi\) and a constraint
\(\mathcal C\subseteq\Xspace\),
\begin{align}
  \pi_!(E+\varphi\circ\pi)&=\pi_!E+\varphi,
  \label{eq:base-potential-law}\\
  \pi_!(E+\ind_{\mathcal C})(y)
  &=\inf_{\substack{\pi(x)=y\\x\in\mathcal C}}E(x).
  \label{eq:constraint-law}
\end{align}
\end{theorem}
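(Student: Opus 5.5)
All four identities are purely order-theoretic. The plan is to prove each by rewriting an infimum over \(\Xspace\) as an iterated infimum over the fibers of \(\pi\). The one structural fact used throughout is the fiber decomposition
\[
  \Xspace=\bigsqcup_{y\in\Yspace}\pi^{-1}(y),
\]
together with the elementary rule that an infimum over a disjoint union equals the infimum of the infima over the pieces. This rule holds in \((-\infty,+\infty]\cup\{-\infty\}\), with \(\inf\varnothing=+\infty\); empty fibers contribute \(+\infty\) and are harmless. No topology or convexity enters.

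For the composition law \eqref{eq:pushforward-composition}, fix \(z\in\Zspace\). The set \((\rho\circ\pi)^{-1}(z)\) is the disjoint union of the fibers \(\pi^{-1}(y)\) over \(y\in\rho^{-1}(z)\). Hence
\[
  ((\rho\circ\pi)_!E)(z)=\inf_{\rho(y)=z}\ \inf_{\pi(x)=y}E(x)=\rho_!(\pi_!E)(z).
\]
I should note that \(\pi_!E\) may take the value \(-\infty\), so \(\rho_!\) must be read on extended-valued functions. The definition still makes sense there, and the same iterated-infimum identity holds verbatim.

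For the task reduction \eqref{eq:task-reduction}, I split the left side along the fibers. On \(\pi^{-1}(y)\) the term \(\ell(\pi(x))=\ell(y)\) is constant, so it remains to show
\[
  \inf_{\pi(x)=y}\{E(x)+\ell(y)\}=(\pi_!E)(y)+\ell(y).
\]
This is the step I expect to need care. It is the translation identity \(\inf_i(a_i+c)=(\inf_i a_i)+c\) in extended arithmetic, and the hypothesis ``the sums are defined'' is exactly what excludes \(-\infty+\infty\). Since \(E>-\infty\), each term \(E(x)+\ell(y)\) is defined. I would check the remaining cases one at a time:
\begin{itemize}[leftmargin=*,itemsep=0.1em]
  \item if \(\ell(y)=+\infty\), both sides equal \(+\infty\), using the convention when the fiber is empty;
  \item if \(\ell(y)\) is finite, translation by a real number commutes with infima, including when the infimum is \(\pm\infty\);
  \item the pair \((\pi_!E)(y)=-\infty\), \(\ell(y)=+\infty\) is precisely the undefined case excluded by hypothesis.
\end{itemize}
Taking the infimum over \(y\) then gives \eqref{eq:task-reduction}.

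The base-potential law \eqref{eq:base-potential-law} is the same fiberwise translation with \(\ell=\varphi\), applied without the outer infimum; the same definedness proviso is implicit. The constraint law \eqref{eq:constraint-law} follows by splitting the fiber into \(\pi^{-1}(y)\cap\mathcal C\), where the integrand equals \(E\), and its complement, where \(\ind_{\mathcal C}=+\infty\) makes the integrand \(+\infty\). The complement therefore does not lower the infimum, and if \(\pi^{-1}(y)\cap\mathcal C=\varnothing\) both sides equal \(+\infty\).

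The only genuine obstacle is the bookkeeping of extended-real arithmetic, especially allowing \(-\infty\) values of \(\pi_!E\) in the composition law and in the translation step. I would handle this by stating once the lemma \(\inf_i(a_i+c)=(\inf_i a_i)+c\), valid whenever the right side is defined and \(a_i>-\infty\), and invoking it in both places.
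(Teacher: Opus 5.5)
Your proposal is correct and is essentially the paper's proof: you split the composite fiber \((\rho\circ\pi)^{-1}(z)\) into the fibers \(\pi^{-1}(y)\) with \(y\in\rho^{-1}(z)\), use that \(\ell\circ\pi\) and \(\varphi\circ\pi\) are constant on each fiber, and unwind the indicator for the constraint law. Your explicit extended-real case analysis goes beyond what the paper writes out; in particular it handles the possibility \((\pi_!E)(y)=-\infty\) and the excluded \(-\infty+\infty\) case, which the paper leaves implicit in the phrase ``for which the sums are defined''.
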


The theorem is a typed regrouping law.  Its value is compositional: a
hierarchical optimizer may eliminate full geometry, structured factors,
visible action scale, and normalized direction in stages without changing the
final reduced energy.  It does not by itself guarantee a minimizer, regularity,
or a useful geometry on the visible space.

\subsection{Canonical minimizer sections}

We next give sufficient conditions for the reduced value to select one hidden
state.  Let \((\Xspace,d)\) be a proper Hadamard space and write
\(\mathcal F_y=\pi^{-1}(y)\).

\begin{assumption}[Convex-fiber regularity]
\label{ass:convex-fiber}
Every nonempty fiber \(\mathcal F_y\) is closed and geodesically convex.
The energy \(E\) is lower semicontinuous, and its restriction to each fiber is
coercive and \(\mu\)-strongly geodesically convex for a common \(\mu>0\).
\end{assumption}

\begin{theorem}[Canonical hidden geometry]
\label{thm:canonical-lift}
Under \cref{ass:convex-fiber}, every \(y\) with finite \((\pi_!E)(y)\) has a
unique minimizer
\begin{equation}
  s_E(y)=\argminop_{\pi(x)=y}E(x),
  \qquad
  E(s_E(y))=(\pi_!E)(y).
  \label{eq:canonical-lift}
\end{equation}
If a group acts isometrically on \(\Xspace\), acts on \(\Yspace\), and leaves
\(E\) and \(\pi\) equivariant, then
\begin{equation}
  s_E(gy)=g s_E(y).
  \label{eq:canonical-equivariance}
\end{equation}
\end{theorem}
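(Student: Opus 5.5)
The plan is to prove existence, then uniqueness, on a single fiber, and to obtain equivariance from uniqueness. Fix \(y\) with \((\pi_!E)(y)<+\infty\). Then \(\mathcal F_y\) is nonempty, and by \cref{ass:convex-fiber} it is closed and geodesically convex in the proper Hadamard space \((\Xspace,d)\). With the restricted metric, \(\mathcal F_y\) is itself a proper metric space: closed bounded subsets of a proper space are compact.

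\textbf{Existence.} Let \(c=(\pi_!E)(y)\). Strong convexity with \(\mu>0\) together with lower semicontinuity implies \(E>-\infty\) on the fiber, and \(E\) never takes \(-\infty\) by standing hypothesis, so \(c\) is a finite real number. Choose a minimizing sequence \(x_k\in\mathcal F_y\) with \(E(x_k)\downarrow c\).
\begin{itemize}[leftmargin=*]
  \item Coercivity of \(E|_{\mathcal F_y}\) bounds \(\{x_k\}\), since the sublevel set \(\{E\le c+1\}\cap\mathcal F_y\) is bounded.
  \item Properness gives a convergent subsequence \(x_k\to x_\star\), and closedness of the fiber gives \(x_\star\in\mathcal F_y\).
  \item Lower semicontinuity gives \(E(x_\star)\le\liminf E(x_k)=c\). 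Hence \(E(x_\star)=c\), which is the identity in \eqref{eq:canonical-lift}.
\end{itemize}

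\textbf{Uniqueness.} Suppose \(x_0,x_1\in\mathcal F_y\) both attain \(c\). Geodesic convexity of the fiber puts the midpoint \(m\) of the geodesic \([x_0,x_1]\) in \(\mathcal F_y\). The \(\mu\)-strong geodesic convexity inequality then gives
\[
  E(m)\le \tfrac12E(x_0)+\tfrac12E(x_1)-\tfrac{\mu}{8}d(x_0,x_1)^2
  = c-\tfrac{\mu}{8}d(x_0,x_1)^2 .
\]
Since \(E(m)\ge c\), this forces \(d(x_0,x_1)=0\). Geodesics in a Hadamard space are unique, so the midpoint is well defined.

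\textbf{Equivariance.} Let \(g\) act isometrically on \(\Xspace\) with \(\pi(gx)=g\pi(x)\) and \(E(gx)=E(x)\). Then \(x\mapsto gx\) is a bijection from \(\mathcal F_y\) onto \(\mathcal F_{gy}\), with inverse given by \(g^{-1}\). Because \(E\) is invariant under this bijection, \((\pi_!E)(gy)=(\pi_!E)(y)\) is finite. The point \(gs_E(y)\) lies in \(\mathcal F_{gy}\) and attains this infimum, so by the uniqueness just proved on the fiber \(\mathcal F_{gy}\), \(s_E(gy)=gs_E(y)\).

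The only delicate point is existence. It needs coercivity interpreted as boundedness of sublevel sets, together with properness. If coercivity were meant only in a weaker sense, one could fall back on the weak (\(\Delta\)-)compactness of bounded sets in Hadamard spaces. In that case lower semicontinuity would have to be combined with convexity to obtain weak lower semicontinuity, and convexity plus closedness of the fiber to obtain weak closedness. Properness makes this detour unnecessary, so the argument above suffices.
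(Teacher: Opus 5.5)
Your proposal is correct and follows the paper's proof essentially step for step. Existence comes from coercivity, properness, lower semicontinuity and closedness of the fiber; uniqueness from the $\mu/8$ midpoint inequality; and equivariance from uniqueness on the image fiber, where your explicit bijection $\mathcal F_y\to\mathcal F_{gy}$ usefully supplies the equality of infima that the paper leaves implicit. The one loose remark, deriving finiteness of $c$ from strong convexity and lower semicontinuity, is unnecessary because finiteness of $(\pi_!E)(y)$ is already a hypothesis.
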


The minimizer section is energy-dependent.  It should be distinguished from
the shortest section of a metric submetry, which is selected by distance alone.
The two coincide when \(E(x)=\tfrac12d(x_0,x)^2\) and the relevant shortest
completion is unique.

\section{Hidden Response and Interaction Curvature}
\label{sec:interaction}

The algebraic reduction becomes a differential theory when the canonical
minimizer varies smoothly.  We state the result in local product coordinates;
at a fiber critical point the resulting reduced quadratic form is intrinsic.

\subsection{Smooth response and effective curvature}

Let \(z=(y,u)\) range over a finite-dimensional manifold locally represented
in Euclidean coordinates, and let \(\Sigspace\) be a finite-dimensional
smooth manifold with local hidden coordinate \(\sigma\).  Consider
\[
  E:(z,\sigma)\longmapsto E(z,\sigma)\in\mathbb R.
\]

\begin{assumption}[Smooth stable reduction]
\label{ass:smooth-reduction}
For some \(r\ge2\), the energy is \(C^{r+1}\).  It is locally uniformly
inf-compact in \(\sigma\): for every compact parameter set \(K\) and every
\(c\in\mathbb R\), the union of the fiber sublevel sets
\[
  \{\sigma:\exists z\in K,\ E(z,\sigma)\le c\}
\]
is relatively compact.  Every \(z\) has a unique interior minimizer
\(\sigma_\star(z)\), and
\[
  H(z):=E_{\sigma\sigma}(z,\sigma_\star(z))
\]
is positive definite.
\end{assumption}

\begin{theorem}[Hidden response and Schur effective curvature]
\label{thm:response-schur}
Under \cref{ass:smooth-reduction}, the minimizer section and reduced energy
\[
  \sigma_\star(z)=\argmin_\sigma E(z,\sigma),
  \qquad
  \bar E(z)=E(z,\sigma_\star(z))
\]
are \(C^r\).  At the canonical lift,
\begin{align}
  D_z\sigma_\star&=-H^{-1}E_{\sigma z},
  \label{eq:hidden-response}\\
  D_z\bar E&=E_z,
  \label{eq:envelope}\\
  D^2_{zz}\bar E
  &=E_{zz}-E_{z\sigma}H^{-1}E_{\sigma z}.
  \label{eq:schur-effective-hessian}
\end{align}
If \(\sigma=(\alpha,\beta)\), let \(\mathbb H\) denote the full Hessian in
\((z,\alpha,\beta)\) at the canonical lift.  When its joint hidden block is
positive definite, sequential and joint second-order elimination agree:
\begin{equation}
  \Schur_{(\alpha,\beta)}\mathbb H
  =\Schur_\alpha\bigl(\Schur_\beta\mathbb H\bigr).
  \label{eq:schur-associativity}
\end{equation}
\end{theorem}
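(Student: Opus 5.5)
The plan is to get regularity of \(\sigma_\star\) from the implicit function theorem applied to the first-order condition \(E_\sigma(z,\sigma)=0\). Continuity of the unique minimizer comes first. Fix \(z_0\) and a compact neighbourhood \(K\ni z_0\). By local uniform inf-compactness with \(c=\sup_{z\in K}E(z,\sigma_\star(z_0))+1\), or any finite bound from continuity of \(E\), all minimizers \(\sigma_\star(z)\), \(z\in K\), lie in a fixed relatively compact set. If \(z_n\to z_0\), any cluster point \(\hat\sigma\) of \(\sigma_\star(z_n)\) satisfies
\[
E(z_0,\hat\sigma)=\lim E(z_n,\sigma_\star(z_n))\le \lim E(z_n,\sigma)=E(z_0,\sigma)
\]
for every \(\sigma\). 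Hence \(\hat\sigma\) is a minimizer, and uniqueness gives \(\hat\sigma=\sigma_\star(z_0)\), so \(\sigma_\star\) is continuous. We must check that \(\hat\sigma\) stays in the interior of the hidden manifold. If \(\Sigspace\) has no boundary this is automatic, since the limit is a minimizer and minimizers are assumed interior.

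The implicit function theorem comes next. Since \(E\in C^{r+1}\), the map \(F=E_\sigma\) is \(C^r\), and \(D_\sigma F=E_{\sigma\sigma}\) is invertible at \((z_0,\sigma_\star(z_0))\) because \(H(z_0)\succ0\). So there is a \(C^r\) local solution branch \(\tilde\sigma(z)\) with \(\tilde\sigma(z_0)=\sigma_\star(z_0)\), and it is the unique zero of \(F\) in a neighbourhood of \(\sigma_\star(z_0)\). Every minimizer is interior, hence a critical point. By continuity, \(\sigma_\star(z)\) lies in that neighbourhood for \(z\) near \(z_0\), so \(\sigma_\star=\tilde\sigma\) locally and \(\sigma_\star\in C^r\). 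This step, which identifies the global minimizer with the local IFT branch, is the main obstacle. It is exactly where inf-compactness and uniqueness are consumed, and without them the minimizer could jump to another critical point. It then follows that \(\bar E=E\circ(\mathrm{id},\sigma_\star)\) is \(C^r\).

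The formulas follow by differentiation. Differentiating \(E_\sigma(z,\sigma_\star(z))=0\) gives \(E_{\sigma z}+H\,D_z\sigma_\star=0\), which is \eqref{eq:hidden-response}. The chain rule gives \(D_z\bar E=E_z+E_\sigma D_z\sigma_\star=E_z\), which is \eqref{eq:envelope}. Since \(E_z\) is \(C^r\) with \(r\ge2\), we can differentiate \(E_z(z,\sigma_\star(z))\) once more to get
\[
D^2_{zz}\bar E=E_{zz}+E_{z\sigma}D_z\sigma_\star=E_{zz}-E_{z\sigma}H^{-1}E_{\sigma z},
\]
which is \eqref{eq:schur-effective-hessian}.

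For \eqref{eq:schur-associativity}, I would use the purely linear-algebraic quotient formula for Schur complements, due to Crabtree and Haynsworth. Write the hidden block as
\[
\begin{pmatrix}\mathbb H_{\alpha\alpha}&\mathbb H_{\alpha\beta}\\ \mathbb H_{\beta\alpha}&\mathbb H_{\beta\beta}\end{pmatrix}\succ0.
\]
Then \(\mathbb H_{\beta\beta}\succ0\), so \(\Schur_\beta\mathbb H\) is defined. Its \(\alpha\alpha\) block is the Schur complement \(\mathbb H_{\alpha\alpha}-\mathbb H_{\alpha\beta}\mathbb H_{\beta\beta}^{-1}\mathbb H_{\beta\alpha}\succ0\), so the second elimination is defined. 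The identity can be verified either by block Gaussian elimination or variationally. In the variational route, the Schur complement of a quadratic form \(q(z,\alpha,\beta)\) with positive definite hidden block is the form \(z\mapsto\min_{\alpha,\beta}q\). That minimum equals \(\min_\alpha\min_\beta q\), which is the quadratic instance of the composition law \eqref{eq:pushforward-composition} in \cref{thm:algebraic-reduction}.
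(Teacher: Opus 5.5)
Your proposal is correct and follows the paper's proof essentially step for step: continuity of the minimizer section from local uniform inf-compactness plus uniqueness, identification of that section with the implicit-function branch of $E_\sigma=0$, differentiation of stationarity and the chain rule for the response, envelope, and Schur formulas, and the variational $\min_\alpha\min_\beta$ argument for Schur associativity. The Crabtree--Haynsworth quotient formula you mention is an equivalent linear-algebraic route to the last identity; the paper uses only the variational argument.
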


The response formula is familiar from parametric optimization and variable
projection \citep{golub1973variableprojection,bonnans2000perturbation}.  Here
it supplies the local law inside a globally defined geometric action bundle
and composes with the optimizer hierarchy in \cref{thm:algebraic-reduction}.

\subsection{Affine perturbations before reduction}

Let \(u\in\Uspace\subset\mathbb R^p\) denote continuous mechanism amplitudes.
Suppose
\begin{equation}
  E(y,\sigma,u)
  =E_0(y,\sigma)+\sum_{i=1}^p u_i\Delta_i(y,\sigma).
  \label{eq:affine-interventions}
\end{equation}
At the canonical lift define
\begin{equation}
  G:T_u\Uspace\to T_\sigma^*\Sigspace,
  \qquad
  Ga=\sum_{i=1}^p a_iD_\sigma\Delta_i.
  \label{eq:interaction-map}
\end{equation}

\begin{theorem}[Reduction-induced interaction curvature]
\label{thm:interaction-curvature}
Under \cref{ass:smooth-reduction,eq:affine-interventions},
\begin{equation}
  \boxed{
  D^2_{uu}\bar E=-G^*H^{-1}G\preceq0.}
  \label{eq:interaction-curvature}
\end{equation}
Equivalently,
\begin{equation}
  \partial^2_{u_i u_j}\bar E
  =-
  \left\langle
  D_\sigma\Delta_i,
  H^{-1}D_\sigma\Delta_j
  \right\rangle.
  \label{eq:pair-interaction-curvature}
\end{equation}
Thus the self-curvature of every mechanism is nonpositive.  A pointwise mixed
interaction vanishes exactly when the corresponding vertical perturbations
are orthogonal in the inverse-Hessian metric.  Mixed entries may have either
sign.
\end{theorem}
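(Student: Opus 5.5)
The plan is to apply \cref{thm:response-schur} with the mechanism amplitudes \(u\) as the relevant block of the visible parameter \(z=(y,u)\), and then use affinity in \(u\) to simplify the Schur formula \eqref{eq:schur-effective-hessian}. By \cref{ass:smooth-reduction}, the reduced energy \(\bar E\) is \(C^r\) with \(r\ge2\), so its second derivatives exist. Taking the \(uu\)-block of \eqref{eq:schur-effective-hessian} gives
\[
  D^2_{uu}\bar E=E_{uu}-E_{u\sigma}H^{-1}E_{\sigma u},
\]
evaluated at the canonical lift \((y,\sigma_\star(y,u),u)\).

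The two blocks on the right follow directly from \eqref{eq:affine-interventions}. Because \(E\) is affine in \(u\), we have \(E_{uu}=0\). Differentiating once in \(u_i\) gives \(E_{u_i}=\Delta_i(y,\sigma)\), and then once in \(\sigma\) gives \(E_{\sigma u}a=\sum_i a_iD_\sigma\Delta_i=Ga\). So \(E_{\sigma u}=G\) as a map \(T_u\Uspace\to T_\sigma^*\Sigspace\), and \(E_{u\sigma}=G^*\) by symmetry of mixed partials, which holds since \(E\) is \(C^{r+1}\) with \(r\ge2\). Substituting yields the boxed identity \eqref{eq:interaction-curvature}.

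Evaluating this identity on basis vectors \(e_i,e_j\) gives the pairing formula \eqref{eq:pair-interaction-curvature}. Negative semidefiniteness follows because \(H\) is positive definite, hence so is \(H^{-1}\), and therefore
\[
  a^\top G^*H^{-1}Ga=\langle Ga,H^{-1}Ga\rangle\ge0 .
\]
In particular each diagonal entry, the self-curvature, is at most zero.

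For the two qualitative remarks, the first is immediate. The bilinear form \((\xi,\eta)\mapsto\langle\xi,H^{-1}\eta\rangle\) is an inner product on \(T^*_\sigma\Sigspace\). The mixed entry \(\partial^2_{u_iu_j}\bar E\) is minus this inner product of \(D_\sigma\Delta_i\) and \(D_\sigma\Delta_j\), so it vanishes exactly when they are \(H^{-1}\)-orthogonal.

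For the claim that mixed entries may have either sign, I would give a one-dimensional example. Take \(E_0=\tfrac12\sigma^2\) and \(\Delta_1=\sigma\), which gives \(G=1\). Then choosing \(\Delta_2=\pm\sigma\) makes the mixed entry equal to \(\mp1\). Inf-compactness and uniqueness of the interior minimizer hold locally in this example, since \(E\) remains strongly convex in \(\sigma\).

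The only point needing care is to confirm that \cref{ass:smooth-reduction} was posed with \(u\) included among the parameters \(z\). Otherwise \(\sigma_\star\) need not be differentiable in \(u\). I expect no genuine obstacle beyond this bookkeeping, and the identification \(E_{u\sigma}=G^*\) under the canonical duality pairing.
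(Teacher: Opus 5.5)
Your proposal is correct and follows the paper's proof. Like the paper, you take the \(uu\)-block of the Schur formula from \cref{thm:response-schur}, use affinity to get \(E_{uu}=0\) and \(E_{\sigma u}=G\), and read off negative semidefiniteness, the pairwise formula, and the \(H^{-1}\)-orthogonality criterion. Your explicit one-dimensional example for the sign of mixed entries is a small addition that the paper leaves implicit. In its place, the paper adds the differentiability-free remark that \(u\mapsto\bar E\) is concave as a pointwise infimum of affine functions.
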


Even without smoothness, \(u\mapsto\bar E(y,u)\) is concave wherever finite,
because it is a pointwise infimum of affine functions.  Smoothness identifies
the exact curvature responsible for that concavity.

The affine hypothesis is essential for the sign.  With general smooth
\(u\)-dependence, \cref{eq:schur-effective-hessian} gives
\begin{equation}
  D^2_{uu}\bar E
  =E_{uu}-E_{u\sigma}H^{-1}E_{\sigma u},
  \label{eq:nonaffine-interaction-curvature}
\end{equation}
so the direct curvature \(E_{uu}\) need not be negative semidefinite.  The
quadratic form is covariant under a change \(u=T\tilde u\), transforming as
\(D^2_{\tilde u\tilde u}\bar E=T^*D^2_{uu}\bar E\,T\).  Individual mixed
entries therefore have meaning only after the mechanism basis, direction,
and units are fixed; all comparisons in this paper refer to the specified
amplitude protocol.

Factorial finite differences are iterated integrals of the corresponding
mixed derivatives; \cref{cor:factorial-contrast} states the general identity.
In particular,
\begin{equation}
  \delta_{\{i,j\}}\bar E
  =-
  \int_0^1\!\int_0^1
  \left\langle D_\sigma\Delta_i,
  H^{-1}D_\sigma\Delta_j\right\rangle
  \,\dd u_i\,\dd u_j,
  \label{eq:pair-factorial-integral}
\end{equation}
where all quantities are evaluated along the canonical minimizer with the
other amplitudes fixed at zero.
The integral may vanish by cancellation even when pointwise interaction does
not.  Reduction must precede finite differencing; the counterexample and the
exact commuting special case are given in \cref{app:finite-contrasts}.

\subsection{Action-fiber specialization}

The global action chart makes the same response tensor computational.  For
fixed \(B\in\Bact\), perturb the canonical fiber objective by
\begin{equation}
  F_B(\Sigma,u)
  =f_B(\Sigma)+\sum_{i=1}^p u_i\Delta_i(B,\Sigma),
  \qquad \Sigma\in\mathscr P_{d-m}.
  \label{eq:perturbed-action-fiber}
\end{equation}
Let \(H_B=\operatorname{Hess}f_B(\Sigma_\star)\) and
\(G_Ba=\sum_i a_iD_\Sigma\Delta_i(B,\Sigma_\star)\).  Strong convexity of
the action objective gives \(H_B\succeq\operatorname{Id}\) as a self-adjoint
operator with respect to the AIRM metric.  For an intrinsic perturbation,
we require the natural complement-frame covariance
\(\Delta_{i,VR}(B,R^\top\Sigma R)=\Delta_{i,V}(B,\Sigma)\); otherwise the
chosen \(\Delta_i\) deliberately selects a hidden frame.

\begin{corollary}[Interaction response of the canonical action controller]
\label{cor:action-interaction-response}
Assume the \(\Delta_i\) are \(C^2\) near \(\Sigma_\star\).  For sufficiently
small \(u\), there is a unique critical point \(\Sigma_\star(B,u)\) near
\(\Sigma_\star\); it is a strict local minimizer.  Define the corresponding
local value branch by
\(\bar F_B^{\rm loc}(u)=F_B(\Sigma_\star(B,u),u)\).  Then
\begin{equation}
  D_u\Sigma_\star(B,0)=-H_B^{-1}G_B,
  \qquad
  D^2_{uu}\bar F_B^{\rm loc}(0)
  =-G_B^*H_B^{-1}G_B.
  \label{eq:action-interaction-response}
\end{equation}
The same inverse vertical Hessian \(H_B^{-1}\) governs the hidden-coordinate
response to the visible action \(B\).  The full section derivative also
contains the chart's direct term:
\[
  D_Bs_A=D_B\Phi_A+D_\Sigma\Phi_A[D_B\Sigma_\star].
\]
Thus interaction curvature, action susceptibility, and the residual solver
are local response, base response, and computation for one canonical fiber
problem.
\end{corollary}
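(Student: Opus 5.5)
The plan is a local implicit-function argument on the fiber manifold \(\mathscr P_{d-m}\), followed by the same second-order computation as in \cref{thm:response-schur,thm:interaction-curvature}, but without the global inf-compactness of \cref{ass:smooth-reduction}. That global hypothesis is unavailable for a general perturbation, so only a local critical-point branch is claimed. I will work in a normal chart at \(\Sigma_\star\), for instance via the AIRM exponential map \(\Sigma=\Sigma_\star^{1/2}\exp(X)\Sigma_\star^{1/2}\) with \(X\) symmetric. In that chart the Riemannian gradient and Hessian of \(f_B\) at \(\Sigma_\star\) coincide with the Euclidean ones in \(X\), up to the metric identification of \(T_{\Sigma_\star}\) with its dual.

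\textbf{Local branch.} Define \(\Phi(X,u)=\operatorname{grad}_X F_B(\Sigma(X),u)\). This is \(C^1\), since \(f_B\) is analytic and the \(\Delta_i\) are \(C^2\). At \((0,0)\) we have \(\Phi=0\), because \(\Sigma_\star\) minimizes \(f_B\). Moreover \(D_X\Phi(0,0)=H_B\), which satisfies \(H_B\succeq\operatorname{Id}\) and is therefore invertible. The implicit function theorem then gives a unique \(C^1\) branch \(X(u)\) of zeros near \(0\), that is, a unique critical point \(\Sigma_\star(B,u)\) near \(\Sigma_\star\). By continuity of the second derivative, \(D_X^2F_B\) stays positive definite near \((0,0)\), so each point of the branch is a strict local minimizer.

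\textbf{Derivatives.} Differentiate \(\Phi(X(u),u)=0\) at \(u=0\) to get \(H_B\,D_uX+D_u\Phi=0\). Since \(D_u\Phi(0,0)a=\sum_i a_i D_\Sigma\Delta_i=G_Ba\), this gives \(D_u\Sigma_\star=-H_B^{-1}G_B\).

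For the value branch, the envelope argument gives \(D_u\bar F_B^{\rm loc}(u)=\partial_uF_B=(\Delta_i(B,\Sigma_\star(B,u)))_i\), because the \(X\)-gradient vanishes along the branch. This requires only \(C^1\) regularity of the branch. Differentiating once more, the affine structure \(\partial_{uu}F_B=0\) leaves
\[
\partial^2_{u_iu_j}\bar F_B^{\rm loc}(0)=\langle D_\Sigma\Delta_i, D_{u_j}\Sigma_\star\rangle=-\langle D_\Sigma\Delta_i,H_B^{-1}D_\Sigma\Delta_j\rangle,
\]
which is the claimed \(-G_B^*H_B^{-1}G_B\). This is exactly \cref{eq:pair-interaction-curvature}, with the pairing taken in the AIRM metric.

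\textbf{Base response and section derivative.} The statement that \(H_B^{-1}\) also governs the response to \(B\) follows from the same implicit differentiation applied to \(\operatorname{grad}f_B(\Sigma)=0\) as a function of \(B\). This gives \(D_B\Sigma_\star=-H_B^{-1}\partial_B\operatorname{grad}f_B\), which is \cref{eq:hidden-response} with \(z=B\). Analyticity of \(f_B\) in \(B\) comes from the global chart. The formula \(D_Bs_A=D_B\Phi_A+D_\Sigma\Phi_A[D_B\Sigma_\star]\) is then the chain rule applied to \(s_A(B)=\Phi_A(B,\Sigma_\star(B))\).

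\textbf{Main obstacle.} I expect the main difficulty to be making the second derivative of the value branch legitimate with only \(C^2\) perturbations, where the branch is merely \(C^1\). The route above avoids needing a second derivative of \(X(u)\): I first establish the envelope identity for \(D_u\bar F_B^{\rm loc}\), whose right-hand side is \(C^1\), and only then differentiate. A second point to check is that the metric conventions are consistent when identifying \(D_\Sigma\Delta_i\) as a tangent vector via AIRM. The frame-covariance condition is not needed for the formulas themselves; it only ensures they are intrinsic.
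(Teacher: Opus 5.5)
Your proposal is correct and follows the paper's proof essentially step for step: the implicit-function theorem applied to the stationarity equation with invertible vertical Hessian \(H_B\), continuity of the Hessian for strict local minimality, implicit differentiation for \(D_u\Sigma_\star\) and for the base response in \(B\), and the chain rule for \(D_Bs_A\). Your envelope step, which differentiates \(D_u\bar F_B^{\rm loc}=(\Delta_i(B,\Sigma_\star(B,u)))_i\) instead of differentiating the merely \(C^1\) branch twice, is a careful justification of the paper's terse ``differentiating the local value branch''; one small fix is that for a chart on \(\mathscr P_{d-m}\) the normal coordinate \(X\) must range over trace-free symmetric matrices, since \(\det(\Sigma_\star^{1/2}e^X\Sigma_\star^{1/2})=e^{\tr X}\).
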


This response statement is local in the perturbation.  A global perturbed
minimum requires separate coercivity or growth control on the \(\Delta_i\).

\begin{proof}
At \((\Sigma_\star,0)\), the derivative in \(\Sigma\) of the stationarity
equation is the invertible operator \(H_B\).  The implicit-function theorem
therefore gives the unique nearby critical branch; continuity of its Hessian
makes it a strict local minimum after shrinking the \(u\)-neighborhood.
Differentiating stationarity and then the local value branch gives
\cref{eq:action-interaction-response}.  The covariance law makes this branch
independent of complement coordinates.  Differentiation with respect to \(B\)
uses the same stationarity equation with \(E_{\Sigma B}\) in place of \(G_B\).
\end{proof}

\section{Global SPD Action Geometry}
\label{sec:action-bundle}

We now realize the abstract reduction globally.  Let
\[
  \Pdet=\{P\in\SPD^d:\det P=1\},
  \qquad
  r(P)=\dai(I,P)=\fro{\log P}.
\]
Fix a full-column-rank \(A\in\mathbb R^{d\times m}\), with \(1\le m<d\)
and \(n=d-m\).  The visible action map and its admissible base are
\begin{equation}
  \pi_A:\Pdet\to\mathbb R^{d\times m},
  \qquad
  \pi_A(P)=PA,
  \qquad
  \Bact=\{B:A^\top B\in\SPD^m\}.
  \label{eq:action-map-base}
\end{equation}
The condition \(A^\top B\in\SPD^m\) includes symmetry and is necessary and
sufficient for an SPD solution of \(PA=B\).

Choose a thin QR factorization and orthogonal completion
\[
  A=UC,
  \qquad U^\top U=I_m,
  \qquad Q=[U,V]\in O(d).
\]
For \(B\in\Bact\), define
\begin{align}
  M(B)&=C^{-\top}A^\top BC^{-1},
  &N(B)&=V^\top BC^{-1},
  \label{eq:action-coordinates}\\
  T(B)&=N(B)M(B)^{-1},
  &L(B)&=\begin{pmatrix}I_m&0\\T(B)&I_n\end{pmatrix},
  \label{eq:action-triangular}\\
  \rho(B)&=(\det M(B))^{-1/n},
  &\mathscr P_n&=\{\Sigma\in\SPD^n:\det\Sigma=1\}.
  \label{eq:action-fiber-coordinates}
\end{align}

\begin{theorem}[Global action bundle and canonical controller]
\label{thm:action-bundle}
The map
\begin{equation}
  \Phi_A:\Bact\times\mathscr P_n\to\Pdet,
  \qquad
  \Phi_A(B,\Sigma)
  =QL(B)
  \begin{pmatrix}M(B)&0\\0&\rho(B)\Sigma\end{pmatrix}
  L(B)^\top Q^\top
  \label{eq:action-bundle-map}
\end{equation}
is a global real-analytic diffeomorphism satisfying
\(\pi_A(\Phi_A(B,\Sigma))=B\).  Consequently,
\begin{equation}
  \Pdet\cong\Bact\times\mathscr P_{d-m},
  \qquad
  \Bact\cong\SPD^m\times\mathbb R^{n\times m}.
  \label{eq:global-action-product}
\end{equation}
For every \(B\), the fiber
\[
  \Pfiber(B)=\{P\in\Pdet:PA=B\}
\]
is nonempty, closed, totally geodesic, and AIRM-isometric to
\(\mathscr P_n\).

Every fiber has a unique nearest controller
\begin{equation}
  s_A(B)=\argminop_{P\in\Pfiber(B)}r(P),
  \qquad
  \mathfrak C_A(B)=r(s_A(B)).
  \label{eq:canonical-controller}
\end{equation}
The section \(s_A\) and squared cost \(\mathfrak C_A^2\) are real analytic.
They are intrinsic under action-basis changes and orthogonal ambient changes:
\begin{equation}
  s_{AD}(BD)=s_A(B),
  \qquad
  s_{RA}(RB)=R s_A(B)R^\top
  \label{eq:controller-equivariance}
\end{equation}
for \(D\in\operatorname{GL}_m\) and \(R\in O(d)\).
\end{theorem}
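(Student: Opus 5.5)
The plan is to reduce everything to a block computation in the orthonormal frame \(Q=[U,V]\). Write \(\tilde P=Q^\top PQ\). Since \(A=UC\), the equation \(PA=B\) is equivalent to \(\tilde P\binom{I_m}{0}=Q^\top BC^{-1}\). Its top block is \(U^\top BC^{-1}=C^{-\top}A^\top BC^{-1}=M(B)\) and its bottom block is \(N(B)\). Hence every \(P\in\Pfiber(B)\) has the form
\[
  \tilde P=\begin{pmatrix}M&N^\top\\N&S\end{pmatrix},
\]
where only \(S\) is free. Symmetry of \(\tilde P\) forces \(M=M^\top\), and positivity forces \(M\succ0\); this is the necessity of \(B\in\Bact\). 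Conversely, the block \(LDL^\top\) factorization with \(T=NM^{-1}\) gives
\[
  \tilde P=L\,\operatorname{diag}\bigl(M,\;S-NM^{-1}N^\top\bigr)L^\top .
\]
So \(\tilde P\succ0\) if and only if the Schur complement \(S-NM^{-1}N^\top\) is SPD. Moreover \(\det P=\det M\cdot\det(S-NM^{-1}N^\top)\), so \(\det P=1\) exactly when the Schur complement equals \(\rho(B)\Sigma\) with \(\det\Sigma=1\).

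This shows that \(\Phi_A\) is a bijection onto \(\Pdet\) with \(\pi_A\circ\Phi_A=\mathrm{pr}_1\). The inverse is
\[
  P\mapsto\Bigl(PA,\;\rho(PA)^{-1}\bigl(S-NM^{-1}N^\top\bigr)\Bigr),
\]
where \(S=V^\top PV\). Both directions are rational maps or real powers of positive determinants, so both are real analytic, and \(\Phi_A\) is a global analytic diffeomorphism. The product \(\Bact\cong\SPD^m\times\mathbb R^{n\times m}\) comes from the coordinates \((M,N)\), with \(B=Q\binom{M}{N}C\). Closedness and nonemptiness of fibers follow immediately: \(\Pfiber(B)\) is the preimage of the point \(B\) under the continuous map \(\pi_A\), and it is nonempty because \(\Phi_A(B,I)\) lies in it.

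For the fiber geometry I would use two facts. First, AIRM is invariant under congruence \(P\mapsto XPX^\top\). Second, \(\Phi_A(B,\Sigma)=X_B\operatorname{diag}(M,\rho\Sigma)X_B^\top\) with \(X_B=QL(B)\) fixed along the fiber. Within the block-diagonal family \(\operatorname{diag}(M,\rho\Sigma)\), the AIRM distance between two members equals \(\fro{\log(\Sigma_0^{-1/2}\Sigma_1\Sigma_0^{-1/2})}\), since the first block cancels and the scalar \(\rho\) cancels. Hence \(\Sigma\mapsto\Phi_A(B,\Sigma)\) is an isometry onto the fiber.

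For total geodesy, take \(P_0,P_1\in\Pfiber(B)\) and set \(K=P_0^{-1}P_1\). Then \(KA=P_0^{-1}B=A\), so the columns of \(A\) lie in the eigenvalue-\(1\) eigenspace of the positively diagonalizable \(K\). Therefore \(K^tA=A\) for all \(t\in\mathbb R\). The full AIRM geodesic \(P_t=P_0K^t\) then satisfies \(P_tA=B\) and \(\det P_t=1\). Because this holds for all \(t\), complete geodesics stay in the fiber, which gives total geodesy and geodesic convexity.

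The nearest-controller claim then follows from \cref{thm:canonical-lift} with \(E=\tfrac12 r^2=\tfrac12\dai(I,\cdot)^2\). This energy is \(1\)-strongly geodesically convex and coercive on the Hadamard space \(\SPD^d\), hence also on the closed convex fiber, so \(s_A(B)\) exists and is unique. For analyticity I would pass to the chart: \(f_B(\Sigma)=\tfrac12\fro{\log\Phi_A(B,\Sigma)}^2\) is jointly real analytic in \((B,\Sigma)\), since \(\log\) is analytic on SPD matrices. Restricting the ambient Hessian bound \(\operatorname{Hess}\tfrac12 r^2\succeq\operatorname{Id}\) to the totally geodesic fiber, where second fundamental forms vanish, gives \(\operatorname{Hess}f_B\succeq\operatorname{Id}\) in the AIRM metric of \(\mathscr P_n\). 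The analytic implicit function theorem, applied to the Riemannian gradient equation in \(\Sigma\), then makes \(\Sigma_\star(B)\) analytic. Consequently \(s_A=\Phi_A(\cdot,\Sigma_\star(\cdot))\) and \(\mathfrak C_A^2=2f_B(\Sigma_\star(B))\) are analytic. I would avoid claiming analyticity of \(\mathfrak C_A\) itself, which may fail where it vanishes.

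Equivariance follows from uniqueness of the minimizer. For the basis change, \(PAD=BD\) if and only if \(PA=B\), since \(D\) is invertible, so \(\mathscr F_{AD}(BD)=\Pfiber(B)\) and the minimizers coincide. For the ambient change, \(P\mapsto RPR^\top\) maps \(\Pfiber(B)\) bijectively onto \(\mathscr F_{RA}(RB)\) and preserves \(r\), since \(\log(RPR^\top)=R\log(P)R^\top\); hence \(s_{RA}(RB)=Rs_A(B)R^\top\).

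I expect the main obstacle to be the analyticity step, specifically justifying the uniform nondegeneracy \(\operatorname{Hess}f_B\succeq\operatorname{Id}\) intrinsically. I would derive it from the Hadamard comparison for \(\tfrac12 d^2\) together with the vanishing second fundamental form. I would also need to check that the implicit-function argument is phrased in the intrinsic tangent space of \(\mathscr P_n\), for instance via the exponential chart at \(\Sigma_\star\), so that analyticity is not lost.
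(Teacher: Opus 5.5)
Your proposal is correct and follows the paper's proof essentially step for step. The shared steps are the Schur-complement parameterization of all completions in the frame \(Q\), the analytic inverse via the Schur complement and \((\det M)^{1/n}\), congruence invariance plus block distance splitting for the fiber isometry, unique projection onto a closed convex fiber, the analytic implicit-function theorem at the positive-definite vertical Hessian, and equivariance by uniqueness.

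The one local difference is total geodesy. You observe, without using the frame, that \(K=P_0^{-1}P_1\) fixes the columns of \(A\), so the whole geodesic \(P_0K^t\) stays in \(\{PA=B,\ \det P=1\}\). The paper instead checks that geodesics between the matrices \(\operatorname{diag}(M,\rho\Sigma_i)\) stay block diagonal and that \(\log\det\) is affine along them. Both arguments work. Passing from ``contains every complete geodesic through two of its points'' to ``totally geodesic'' uses the closedness you already established.
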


The identity is a reference controller, not an essential restriction.  For
any \(P_{\rm ref}\in\Pdet\), congruence by \(P_{\rm ref}^{-1/2}\) sends the
nearest-completion problem from \(P_{\rm ref}\) to the identity-reference
problem with transformed action
\(\widetilde A=P_{\rm ref}^{1/2}A\) and
\(\widetilde B=P_{\rm ref}^{-1/2}B\).  Thus the bundle and solver apply to an
arbitrary determinant-one baseline without changing their geometry.

The proof responsibility in this theorem is worth separating.  General
Hadamard and SPD-submanifold theory supplies the unique nearest point after a
fiber has been shown closed and totally geodesic
\citep{lim2004best,tumpach2024totally}.  The action-specific results are the
explicit global trivialization \(\Phi_A\), the identification and total
geodesy of every \(PA=B\) fiber, analytic dependence on \(B\), the residual
equation below, and the solver and active reduction in
\cref{sec:certified-solver}.  Projection uniqueness by itself is not claimed
as new.

The action constraint gives a complete inverse-Hessian multi-secant instance
of the abstract theory.  For \(Y,S\in\mathbb R^{d\times m}\), take
\[
  \Xspace=\Pdet,\qquad
  \pi_Y(P)=PY,\qquad
  E(P)=\tfrac12\dai(I,P)^2,\qquad
  y=S.
\]
Then \(Y^\top S\in\SPD^m\) is exactly the admissibility condition and the
canonical lift \(s_E(S)\) is \(s_Y(S)\) from
\cref{eq:canonical-controller}.  Thus the theorem identifies every hidden SPD
completion and selects the AIRM least-deformation controller for an admissible
multi-secant system.  Classical quasi-Newton updates select by other norms or
divergences
\citep{dennis1979leastchange,fletcher1991variational,kanamori2013bregman}; the
global bundle, total-geodesy, AIRM certificate, and active reduction are the
additional objects established here.

The theorem gives a global hidden coordinate \(\Sigma\).  It also supplies a
residual characterization and an a posteriori certificate.  For fixed \(B\),
abbreviate \(M=M(B)\), \(L=L(B)\), and \(\rho=\rho(B)\), and set
\[
  R_0=L^{-1}L^{-\top},
  \qquad
  \mathcal Q_\Sigma=\operatorname{diag}(M,\rho\Sigma),
\]
\begin{equation}
  \mathcal R(\Sigma)
  =\log(\mathcal Q_\Sigma^{-1/2}R_0\mathcal Q_\Sigma^{-1/2}).
  \label{eq:action-residual}
\end{equation}
If \(X^\circ=X-\tr(X)I_n/n\), the canonical hidden coordinate is the unique
solution of
\begin{equation}
  \mathcal R_{22}(\Sigma_\star)^\circ=0.
  \label{eq:action-normal-equation}
\end{equation}
For every trial \(\Sigma\in\mathscr P_n\),
\begin{align}
  \dai(\Sigma,\Sigma_\star)
  &\le\fro{\mathcal R_{22}(\Sigma)^\circ},
  \label{eq:action-distance-certificate}\\
  0\le f_B(\Sigma)-f_B(\Sigma_\star)
  &\le\frac12\fro{\mathcal R_{22}(\Sigma)^\circ}^2,
  \label{eq:action-value-certificate}
\end{align}
where \(f_B(\Sigma)=\tfrac12\dai(R_0,\mathcal Q_\Sigma)^2\).

\subsection{Active reduction}

Let
\[
  W=\operatorname{span}(\operatorname{col}A,\operatorname{col}B),
  \qquad r=\dim W\le2m,
  \qquad q=d-r.
\]
If \(q>0\), the canonical controller has the form
\begin{equation}
  s_A(B)=H_\star\oplus(\det H_\star)^{-1/q}I_q,
  \label{eq:active-controller}
\end{equation}
where \(H_\star\in\operatorname{SPD}(W)\) uniquely minimizes
\begin{equation}
  \min_{\substack{H\in\operatorname{SPD}(W)\\HA=B}}
  \left\{\fro{\log H}^2+\frac{(\log\det H)^2}{q}\right\}.
  \label{eq:active-controller-problem}
\end{equation}
Thus the matrix variable is supported on an ambient subspace of dimension
\(r\le2m\), rather than \(d\).  It still has \(O(r^2)\) scalar degrees of
freedom; the claim is an active-subspace reduction, not a \(2m\)-variable
parameterization.
The next section turns the normal equation into a globally convergent
residual-gradient iteration and proves that the same active symmetry is
preserved at every step.

\subsection{Scale and reference}

The determinant constraint fixes unit-volume shape; it does not create a
dynamical gauge, because rescaling changes \(PA\) and the update time scale.
Dropping the constraint replaces \(\mathscr P_n\) by \(\SPD^n\).  The
complement volume keeps the admissible base \(\Bact\) unchanged, while the
feasible fiber and canonical section change.  The compression
\(P\mapsto A^\top PA\) has a different fiber
\informationgeometrycite; here the complete action \(PA\) is
fixed.

\section{A Globally Certified Reduced Solver}
\label{sec:certified-solver}

We first isolate the computation principle behind canonical reduction.  This
also clarifies which part is general Hadamard optimization and which part is
specific to the SPD action bundle.

\begin{theorem}[Canonical strongly-convex fiber engine]
\label{thm:canonical-fiber-engine}
Let \(\mathcal C\) be a closed geodesically convex totally geodesic
submanifold of a finite-dimensional Hadamard manifold.  Suppose
\(F:\mathcal C\to\mathbb R\) is \(C^2\), coercive, and
\(\mu\)-strongly geodesically convex.  If
\[
  \operatorname{Hess}F\preceq L_0g
  \quad\text{on}\quad
  \{x:F(x)\le F(x_0)\},
\]
then \(F\) has a unique minimizer \(x_\star\), every complete update segment
of
\begin{equation}
  x_{k+1}
  =\operatorname{Exp}_{x_k}
   \left(-L_0^{-1}\operatorname{grad}F(x_k)\right)
  \label{eq:abstract-fiber-update}
\end{equation}
stays in the current sublevel set, and, with
\(q_{\mu,L}=1-\mu/L_0\),
\begin{align}
  \Delta_k&\le q_{\mu,L}^k\Delta_0,
  &
  d(x_k,x_\star)
  &\le\sqrt{\frac{2\Delta_0}{\mu}}\,q_{\mu,L}^{k/2},
  \label{eq:abstract-fiber-rates}\\
  \norm{\operatorname{grad}F(x_k)}
  &\le\sqrt{2L_0\Delta_0}\,q_{\mu,L}^{k/2}.
  \label{eq:abstract-fiber-gradient-rate}
\end{align}
At every \(x\in\mathcal C\),
\begin{equation}
  d(x,x_\star)\le\frac{\norm{\operatorname{grad}F(x)}}{\mu},
  \qquad
  0\le F(x)-F(x_\star)
  \le\frac{\norm{\operatorname{grad}F(x)}^2}{2\mu}.
  \label{eq:abstract-fiber-certificates}
\end{equation}
The constants in \eqref{eq:abstract-fiber-certificates} are sharp.
\end{theorem}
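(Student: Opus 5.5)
The plan is to reduce everything to one-dimensional calculus along geodesics of \(\mathcal C\). Total geodesy makes the intrinsic exponential map of \(\mathcal C\) coincide with the ambient one, so \(\mathcal C\) with the induced metric is itself a complete Hadamard manifold. Every comparison argument can therefore be run inside \(\mathcal C\), and \(\operatorname{grad}F\) and \(\operatorname{Hess}F\) are intrinsic to \(\mathcal C\).

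\textbf{Existence, uniqueness, and the pointwise certificates.} Existence of \(x_\star\) follows from coercivity and continuity: sublevel sets are closed and bounded in the complete manifold \(\mathcal C\), hence compact by Hopf--Rinow. Uniqueness follows from strict geodesic convexity. For the certificates, take the unit-speed geodesic \(\gamma\) from \(x\) to \(x_\star\), of length \(t_\star=d(x,x_\star)\), and set \(h(t)=F(\gamma(t))\). Strong convexity gives \(h''\ge\mu\). Together with \(h'(t_\star)=0\), this yields \(h'(0)\le-\mu t_\star\), so \(\mu d(x,x_\star)\le\norm{\operatorname{grad}F(x)}\). The value bound comes from the lower quadratic model \(F(z)\ge F(x)+\ip{\operatorname{grad}F(x)}{v}+\tfrac\mu2\norm{v}^2\), where \(z=\operatorname{Exp}_x v\) and every \(z\in\mathcal C\) is reachable since \(\mathcal C\) is Hadamard. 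Minimizing the right-hand side over \(v\) gives \(F(x_\star)\ge F(x)-\norm{\operatorname{grad}F(x)}^2/(2\mu)\). Sharpness is witnessed by \(F(x)=\tfrac\mu2\norm{x-x_\star}^2\) on a flat geodesic line, where both inequalities are equalities.

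\textbf{The first-return (sublevel) step.} This is the main obstacle, because the upper bound \(\operatorname{Hess}F\preceq L_0 g\) is only assumed on \(S_0=\{F\le F(x_0)\}\). Inductively assume \(x_k\in S_k=\{F\le F(x_k)\}\subseteq S_0\). Let \(g_k=\operatorname{grad}F(x_k)\) and \(\phi(t)=F(\operatorname{Exp}_{x_k}(-tg_k))\) for \(t\in[0,L_0^{-1}]\). Then \(\phi'(0)=-\norm{g_k}^2<0\). Let \(\tau\) be the first time in \((0,L_0^{-1}]\) with \(\phi(\tau)=\phi(0)\), if one exists. On \([0,\tau]\) the curve stays in \(S_k\), so \(\phi''\le L_0\norm{g_k}^2\) there. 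This gives
\[
\phi(t)\le\phi(0)-t\norm{g_k}^2+\tfrac{L_0}{2}t^2\norm{g_k}^2 ,
\]
which is strictly less than \(\phi(0)\) for \(0<t\le L_0^{-1}<2/L_0\). That contradicts \(\phi(\tau)=\phi(0)\), so no first return occurs. Hence the whole segment lies in \(S_k\), and at \(t=L_0^{-1}\) we obtain the descent inequality \(F(x_{k+1})\le F(x_k)-\norm{g_k}^2/(2L_0)\). The case \(g_k=0\) is trivial, since then \(x_{k+1}=x_k=x_\star\).

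\textbf{Rates.} Write \(\Delta_k=F(x_k)-F(x_\star)\). Combining the descent inequality with the gradient-domination certificate \(\norm{g_k}^2\ge2\mu\Delta_k\) gives \(\Delta_{k+1}\le(1-\mu/L_0)\Delta_k\), and hence \(\Delta_k\le q_{\mu,L}^k\Delta_0\). For the distance rate, use the one-dimensional strong-convexity bound \(h(t_\star)\ge h(0)+h'(0)t_\star+\tfrac\mu2t_\star^2\) along the geodesic from \(x_\star\) to \(x_k\), with \(h'(0)=0\) at the minimizer; this gives \(\tfrac\mu2d(x_k,x_\star)^2\le\Delta_k\). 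For the gradient rate, \(\norm{g_k}^2\le2L_0\Delta_k\). This follows from the descent step itself: \(\Delta_k-\Delta_{k+1}\ge\norm{g_k}^2/(2L_0)\) with \(\Delta_{k+1}\ge0\). Substituting the linear rate for \(\Delta_k\) finishes both bounds.
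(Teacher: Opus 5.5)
Your proposal is correct and follows essentially the same route as the paper. Both use coercivity for existence and strict convexity for uniqueness. Both rule out a first return to the current sublevel by the Taylor bound, which gives $F(x_{k+1})\le F(x_k)-\norm{\operatorname{grad}F(x_k)}^2/(2L_0)$. Both derive the gradient-domination bound $\norm{\operatorname{grad}F}^2\ge 2\mu\Delta$ from strong convexity for the linear rate, and both use the quadratic $\tfrac{\mu}{2}d(\cdot,x_\star)^2$ on a flat line for sharpness. The only difference is cosmetic: you obtain $d(x,x_\star)\le\norm{\operatorname{grad}F(x)}/\mu$ from monotonicity of $h'$ along the geodesic, while the paper adds the two strong-convexity inequalities based at $x$ and at $x_\star$; these express the same one-dimensional fact.
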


The first-return sublevel statement is important: it uses a Hessian bound on
the current sublevel itself, rather than first enlarging its radius to cover a
putative update.  For squared distance on a Hadamard manifold with
\(-\kappa\le\operatorname{sec}\le0\), Hessian comparison supplies the sharp
radius-only majorant
\begin{equation}
  L(D)=\psi(\sqrt\kappa D),
  \qquad
  \psi(s)=
  \begin{cases}
    s\coth s,&s>0,\\
    1,&s=0.
  \end{cases}
  \label{eq:hessian-comparison-factor}
\end{equation}
The general descent ingredients are classical; the theorem records the exact
form needed to connect the canonical section to the action-specific result.

\subsection{Explicit SPD residual and nonasymptotic complexity}

For fixed \(B\in\Bact\), retain \(R_0,\mathcal Q_\Sigma,f_B\), and
\(\mathcal R(\Sigma)\) from \cref{sec:action-bundle}, and define
\begin{equation}
  W_B(\Sigma)=\mathcal R_{22}(\Sigma)^\circ.
  \label{eq:solver-residual}
\end{equation}
This is the exact whitened negative gradient:
\begin{equation}
  -\operatorname{grad}f_B(\Sigma)
  =\Sigma^{1/2}W_B(\Sigma)\Sigma^{1/2},
  \qquad
  \norm{\operatorname{grad}f_B(\Sigma)}_\Sigma
  =\fro{W_B(\Sigma)}.
  \label{eq:residual-gradient-identity}
\end{equation}

Given any \(\Sigma_0\in\mathscr P_n\), put
\begin{equation}
  D_0=\dai(R_0,\mathcal Q_{\Sigma_0}),
  \qquad
  L_0=\psi(D_0/\sqrt2),
  \qquad
  q_0=1-L_0^{-1}.
  \label{eq:solver-constants}
\end{equation}
If \(D_0=0\), the initial controller is canonical.  Otherwise define
\begin{equation}
  \chi_0=-\log q_0
  =-\log(1-L_0^{-1})
  \label{eq:solver-discrete-exponent}
\end{equation}
and run
\begin{equation}
  \boxed{
  \Sigma_{k+1}
  =\Sigma_k^{1/2}
   \exp\!\left(L_0^{-1}W_B(\Sigma_k)\right)
   \Sigma_k^{1/2}.}
  \label{eq:certified-solver-update}
\end{equation}

\begin{theorem}[Global certified action solver]
\label{thm:certified-action-solver}
Let \(A\) have full column rank, let \(B\in\Bact\), and let
\(\Sigma_\star\) be the hidden coordinate of \(s_A(B)\).  From every
\(\Sigma_0\in\mathscr P_n\), \eqref{eq:certified-solver-update} preserves
determinant one and satisfies
\begin{align}
  f_B(\Sigma_{k+1})
  &\le f_B(\Sigma_k)
  -\frac{1}{2L_0}\fro{W_B(\Sigma_k)}^2,
  \label{eq:solver-descent}\\
  f_B(\Sigma_k)-f_B(\Sigma_\star)
  &\le \frac{D_0^2}{2}q_0^k,
  \label{eq:solver-value-rate}\\
  \dai(\Sigma_k,\Sigma_\star)
  &=\dai(\Phi_A(B,\Sigma_k),s_A(B))
  \le D_0q_0^{k/2},
  \label{eq:solver-distance-rate}\\
  \fro{W_B(\Sigma_k)}
  &\le \sqrt{L_0}\,D_0q_0^{k/2}.
  \label{eq:solver-residual-rate}
\end{align}
The observable residual gives the sharp posterior certificates
\begin{align}
  \dai(\Phi_A(B,\Sigma_k),s_A(B))
  &\le\fro{W_B(\Sigma_k)},
  \label{eq:solver-distance-certificate}\\
  0\le f_B(\Sigma_k)-f_B(\Sigma_\star)
  &\le\frac12\fro{W_B(\Sigma_k)}^2.
  \label{eq:solver-gap-certificate}
\end{align}
For \(D_0>0\), value, controller, and residual tolerances are guaranteed,
respectively, after
\begin{align}
  k&\ge\frac1{\chi_0}
       \pospart{\log\frac{D_0^2}{2\varepsilon_f}},
  \label{eq:solver-value-complexity}\\
  k&\ge\frac2{\chi_0}
       \pospart{\log\frac{D_0}{\varepsilon_x}},
  \label{eq:solver-distance-complexity}\\
  k&\ge\frac2{\chi_0}
       \pospart{\log\frac{\sqrt{L_0}D_0}{\varepsilon_r}}
  \label{eq:solver-residual-complexity}
\end{align}
iterations, with ceilings understood.
\end{theorem}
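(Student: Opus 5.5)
The plan is to instantiate \cref{thm:canonical-fiber-engine} on \(\mathcal C=\mathscr P_n\) with \(F=f_B\), \(\mu=1\), and \(L_0=\psi(D_0/\sqrt2)\). Then every displayed bound becomes a specialization.

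\textbf{Setup.}
\begin{itemize}
  \item \emph{Geometry.} Under AIRM, \(\mathscr P_n\) is a closed, totally geodesic submanifold of the Hadamard manifold \(\SPD^n\). The fiber isometry of \cref{thm:action-bundle} transports the problem to \(\Pfiber(B)\), and \(f_B(\Sigma)=\tfrac12\dai(R_0,\mathcal Q_\Sigma)^2\).
  \item \emph{Strong convexity.} The map \(\Sigma\mapsto\mathcal Q_\Sigma=\operatorname{diag}(M,\rho\Sigma)\) is an affine-invariant totally geodesic isometric embedding (up to the constant block and scale, which are AIRM isometries). Hence \(f_B\) is half a squared distance restricted to a totally geodesic submanifold. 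On a Hadamard manifold this is \(1\)-strongly geodesically convex and coercive. This gives \(\mu=1\), matching \(H_B\succeq\operatorname{Id}\) used earlier.
  \item \emph{Gradient identity.} The Euclidean-coordinate gradient of \(\tfrac12\dai(R_0,\cdot)^2\) at \(\mathcal Q\) is \(-\mathcal Q^{1/2}\log(\mathcal Q^{-1/2}R_0\mathcal Q^{-1/2})\mathcal Q^{1/2}\).
    \begin{itemize}
      \item Pulling back along the \(22\) block and multiplying the \(\rho\) scalings out gives \(\Sigma^{1/2}\mathcal R_{22}\Sigma^{1/2}\) in the ambient tangent space.
      \item Projecting onto the determinant-one tangent space \(\{\Sigma^{1/2}X\Sigma^{1/2}:\tr X=0\}\) removes the trace part.
      \item This yields \eqref{eq:residual-gradient-identity} and \(\norm{\operatorname{grad}f_B}_\Sigma=\fro{W_B}\).
    \end{itemize}
  \item \emph{Update map.} The AIRM exponential \(\operatorname{Exp}_\Sigma(\xi)=\Sigma^{1/2}\exp(\Sigma^{-1/2}\xi\Sigma^{-1/2})\Sigma^{1/2}\) turns \eqref{eq:abstract-fiber-update} into \eqref{eq:certified-solver-update}. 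Because \(\tr W_B=0\), the update preserves determinant one.
\end{itemize}

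\textbf{Smoothness constant (the main step).} We need \(\operatorname{Hess}f_B\preceq L_0\) on the sublevel \(\{f_B\le f_B(\Sigma_0)\}\).
\begin{itemize}
  \item On that sublevel, \(\dai(R_0,\mathcal Q_\Sigma)\le D_0\).
  \item The Hessian of \(\tfrac12 d(p,\cdot)^2\) on \(\SPD\) with AIRM has eigenvalues \(\psi(\sqrt{\kappa_\xi}\,D)\), where the relevant curvatures are bounded by \(-1/2\). Equivalently, the eigenvalues are of the form \((\lambda/2)\coth(\lambda/2)\) with the \(\lambda\) being eigenvalue differences of the log.
  \item I will bound the differences using \(\fro{\log}\le D\): any difference satisfies \(|\lambda_i-\lambda_j|\le\sqrt2\,D\). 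Hence \((\lambda/2)\coth(\lambda/2)\le\psi(D/\sqrt2)\).
  \item Restricting to a totally geodesic submanifold cannot increase the Hessian bound.
  \item The delicate point is making this eigenvalue-difference estimate sharp, giving exactly \(\psi(D_0/\sqrt2)\). I would verify it by directly diagonalizing the Jacobi operator \(\xi\mapsto -\tfrac14[X,[X,\xi]]\).
\end{itemize}

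\textbf{Conclusions.} The engine then gives:
\begin{itemize}
  \item the descent bound \eqref{eq:solver-descent}, the standard \(L_0\)-smooth step inequality along the complete segment that stays in the sublevel;
  \item the rates \eqref{eq:solver-value-rate}–\eqref{eq:solver-residual-rate}, using \(\Delta_0\le D_0^2/2\), which follows because \(f_B(\Sigma_\star)\ge0\);
  \item the certificates \eqref{eq:solver-distance-certificate}–\eqref{eq:solver-gap-certificate}, from \eqref{eq:abstract-fiber-certificates} with \(\mu=1\);
  \item the distance equality in \eqref{eq:solver-distance-rate}, from the fiber isometry of \cref{thm:action-bundle}.
\end{itemize}
The iteration counts follow by solving \(C q_0^{k}\le\varepsilon\), or \(C q_0^{k/2}\le\varepsilon\), for \(k\) using \(q_0=e^{-\chi_0}\), with positive parts covering the case where the tolerance already holds.
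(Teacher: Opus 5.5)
Your proof is correct and has the same skeleton as the paper's proof. The fiber problem is an instance of \cref{thm:canonical-fiber-engine} with $\mu=1$ and $L_0=\psi(D_0/\sqrt2)$. The residual is the whitened, projected Riemannian gradient, and the update is the AIRM exponential step. The rates and certificates then follow from the descent inequality and the two strong-convexity (PL) inequalities, using $\Delta_0\le D_0^2/2$. The paper re-runs the first-return argument inline rather than citing the engine, but the content is the same.

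The one genuinely different ingredient is how you obtain the sublevel Hessian majorant.
\begin{itemize}
\item The paper proves the curvature pinching $-\tfrac12\le\operatorname{sec}\le0$ (\cref{lem:airm-curvature-bound}). It then applies a general index-lemma comparison for $\tfrac12d(z,\cdot)^2$ (\cref{lem:squared-distance-comparison}). That route needs only a curvature bound, which is exactly the form in which the engine is stated.
\item You instead use that AIRM is a symmetric space, so the Jacobi operator $\xi\mapsto-\tfrac14[X,[X,\xi]]$ is parallel along the geodesic. Consequently the whitened Hessian of $\tfrac12\dai(R_0,\cdot)^2$ at $\mathcal Q_\Sigma$ has exact spectrum $\{\psi((\lambda_i-\lambda_j)/2)\}$, where $\lambda$ are the eigenvalues of $\log(\mathcal Q_\Sigma^{-1/2}R_0\mathcal Q_\Sigma^{-1/2})$.
\item From there, $(\lambda_i-\lambda_j)^2\le2\norm{\lambda}^2=2D^2$ and monotonicity of $\psi$ give $\psi(D/\sqrt2)$. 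Total geodesy of the fiber transfers this bound to the restricted Hessian.
\end{itemize}
This is the same elementary inequality that drives the paper's commutator bound. Your version is more explicit and shows at once where equality occurs: when the log-spectrum is proportional to $(1,-1,0,\dots)$, which is the content of \cref{prop:sharp-action-majorant}. For the theorem itself only the upper bound is needed, so the sharpness check you flag as the \emph{delicate point} is not required.

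One cosmetic slip: $-Q^{1/2}\log(Q^{-1/2}R_0Q^{-1/2})Q^{1/2}$ is the Riemannian gradient written in matrix coordinates, not the Frobenius gradient (which is its congruence by $Q^{-1}$). Your subsequent whitening and projection use it correctly, so nothing downstream is affected.
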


The constant \(L_0=\psi(D_0/\sqrt2)\) is the sharp dimension-uniform
curvature-comparison majorant for the current sublevel; the previous
radius-doubling argument is unnecessary.  The denominators above invert the
discrete factor \(q_0\) exactly; \(1/\chi_0\le L_0\) recovers the simpler
exponential estimates.

\subsection{Active realization and spectral-arithmetic cost}

The operation counts below use a spectral-arithmetic model: dense
factorizations and matrix functions on an \(r\times r\) symmetric matrix cost
\(O(r^3)\), with scalar elementary functions counted at unit cost.  These are
algebraic operation counts, not bit-complexity or floating-point error bounds.

Let
\[
  \mathcal W_A
  =\operatorname{span}(\operatorname{col}A,\operatorname{col}B),
  \quad
  r=\dim\mathcal W_A\le2m,
  \quad
  s=r-m,
  \quad
  q=d-r.
\]
The active-adapted construction in
\cref{eq:active-residual-splitting} represents the hidden state by
\(\Sigma_a\oplus\beta I_q\), with
\(\det\Sigma_a\,\beta^q=1\), and the full residual norm by
\(\fro{W_a}^2+qw_c^2\).  All complement terms are omitted when \(q=0\).

\begin{corollary}[Active canonical solver]
\label{cor:active-core-solver}
Starting from \(\Sigma_{a,0}=I_s\) and, when present, \(\beta_0=1\), the
updates
\begin{equation}
  \Sigma_{a,k+1}
  =\Sigma_{a,k}^{1/2}e^{W_{a,k}/L_0}\Sigma_{a,k}^{1/2},
  \qquad
  \beta_{k+1}=\beta_ke^{w_{c,k}/L_0}
  \label{eq:active-solver-update}
\end{equation}
are exactly the full iteration \eqref{eq:certified-solver-update}.  They
preserve \(P_kA=B\), \(P_k\succ0\), and \(\det P_k=1\), while the stopping
test \(\fro{W_{a,k}}^2+qw_{c,k}^2\le\varepsilon^2\) implies controller
error at most \(\varepsilon\).

Preprocessing costs \(O(dm^2+r^3)\) spectral-arithmetic operations and
\(O(dm+r^2)\) storage.  Each iteration costs \(O(r^3)\).  For \(D_0>0\), the total
spectral-arithmetic
cost is
\begin{equation}
  O\!\left(
  dm^2+
  r^3\left\{
  1+\frac1{\chi_0}
  \pospart{\log\frac{\sqrt{L_0}D_0}{\varepsilon}}
  \right\}\right),
  \qquad r\le2m.
  \label{eq:active-total-complexity}
\end{equation}
The implicit controller application costs \(O(dr+r^2)\).  The same formulas
cover \(q=0\); if \(s=0\), the solver has no variable and the canonical
controller is
\[
  s_A(B)=EME^\top+\rho(I-EE^\top).
\]
\end{corollary}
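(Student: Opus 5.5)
The proof rests on one invariance: the full iteration \eqref{eq:certified-solver-update} never leaves an active-symmetric subspace of the hidden fiber. Once that is shown, the remaining claims are bookkeeping.

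\textbf{Step 1 (adapted frame).} Choose the orthogonal completion \(Q=[U,V]\) so that \(V=[V_a,V_c]\). Here \(V_a\) spans the \(s\)-dimensional part of \(\mathcal W_A\) orthogonal to \(\operatorname{col}A\), and \(V_c\) spans \(\mathcal W_A^\perp\), of dimension \(q\). Because \(\operatorname{col}B\subseteq\mathcal W_A\), we get \(V_c^\top B=0\). Hence \(N(B)\), and therefore \(T(B)\), have vanishing last \(q\) rows.

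\textbf{Step 2 (block structure of the residual).} From Step 1, \(L\) and \(R_0=L^{-1}L^{-\top}\) are block diagonal with respect to \((m+s)\oplus q\), and the \(q\)-block of \(R_0\) is \(I_q\). Now take a hidden state \(\Sigma=\Sigma_a\oplus\beta I_q\). Then \(\mathcal Q_\Sigma\) is block diagonal, and so is \(\mathcal Q_\Sigma^{-1/2}R_0\mathcal Q_\Sigma^{-1/2}\). Its \(q\)-block equals \((\rho\beta)^{-1}I_q\). Since the matrix logarithm respects block-diagonal structure, \(\mathcal R_{22}\) splits as \(\mathcal R_a\oplus(-\log(\rho\beta))I_q\). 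Removing the trace gives \(W_B=W_a\oplus w_cI_q\), with \(\tr W_a+qw_c=0\). This is the splitting \cref{eq:active-residual-splitting}. It also shows \(\fro{W_B}^2=\fro{W_a}^2+qw_c^2\).

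\textbf{Step 3 (exactness of the active updates).} Apply the update \eqref{eq:certified-solver-update} to a block-diagonal state \(\Sigma_a\oplus\beta I_q\). Square roots and exponentials act blockwise, so the result is exactly \eqref{eq:active-solver-update}. The zero-trace property gives \(\det\Sigma_{a,k+1}\,\beta_{k+1}^q=\det\Sigma_{a,k}\,\beta_k^q\), so the determinant stays one. The initializer \(I_s\oplus I_q\) has this form, so by induction the active iteration coincides with the full one at every step. The properties \(P_k=\Phi_A(B,\Sigma_k)\succ0\), \(P_kA=B\), and \(\det P_k=1\) then follow from \cref{thm:action-bundle}. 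The stopping test is \eqref{eq:solver-distance-certificate} of \cref{thm:certified-action-solver}, rewritten with the Step 2 norm identity.

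\textbf{Step 4 (costs).} Preprocessing has three parts:
\begin{enumerate}[leftmargin=*,itemsep=0.1em]
  \item thin QR of \(A\), at cost \(O(dm^2)\);
  \item projecting \(B\) onto \(\operatorname{col}A\) and orthonormalizing the residual \(B-UU^\top B\) to get \(V_a\), at cost \(O(dm^2)\);
  \item forming the \(r\times r\) blocks \(M\), \(N_a\), \(L_a\), \(\rho\), at cost \(O(r^3)\) or less.
\end{enumerate}
Storage is \(U\), \(V_a\) (\(O(dm)\)) and the core (\(O(r^2)\)). Each iteration needs one \(r\times r\) inverse square root, one logarithm, one exponential and one square root, which is \(O(r^3)\). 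The scalar \(w_c=-\log(\rho\beta)-\tr(\cdot)/n\) costs \(O(1)\). Multiplying the residual iteration bound \eqref{eq:solver-residual-complexity} by this per-step cost and adding preprocessing gives \eqref{eq:active-total-complexity}.

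To apply the controller, write \(P=E_rKE_r^\top+c(I-E_rE_r^\top)\), where \(E_r=[U,V_a]\) and \(c=\rho\beta\). Computing \(Px\) then costs \(O(dr+r^2)\). If \(s=0\), the fiber variable is only the scalar \(\beta\), and determinant one forces \(\beta=1\). With \(T=0\), this gives \(E=U\) and the stated formula. The identity \(EME^\top\) must be read in the original action coordinates.

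\textbf{Main obstacle.} I expect Step 2 to be the hardest step: checking that \(T\) has no \(V_c\)-component and that the \(q\)-block of \(R_0\) is exactly \(I_q\). Everything downstream relies on the block-diagonal invariance established there. I will verify it by writing \(L^{-1}\) explicitly as \(\begin{pmatrix}I&0\\-T&I\end{pmatrix}\).
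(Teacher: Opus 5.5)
Your proposal is correct and follows essentially the same route as the paper's proof. The shared steps are: an adapted frame $[U,V_a,Z]$ in which $N$, $L$, and $R_0$ are block diagonal with $I_q$ in the inactive block; blockwise functional calculus giving $W_B=W_a\oplus w_cI_q$ with $\tr W_a+qw_c=0$; induction from the block initializer; the distance certificate for the stopping test; and the residual iteration bound multiplied by the $O(r^3)$ per-step cost. The only differences are cosmetic: you take a QR of $A$ and then orthonormalize $B-UU^\top B$, which must be rank-revealing since $s$ may be less than $m$, where the paper uses one rank-revealing QR of $[A,B]$; and the $q=0$ case is covered by your construction with empty complement blocks.
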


\subsection{Conditional inexact residuals}

Finite-precision propagation requires certified numerical information that is
separate from the geometric solver theorem.  Assume a matrix-function routine
supplies a rigorous initial radius majorant \(\widehat D_0\ge D_0\), a
symmetric trace-free direction
\(\widetilde W_k=W_B(\Sigma_k)+E_k\), and an error majorant
\(b_k\ge\fro{E_k}\).  Such bounds require a certified matrix-function
implementation; they are not produced by the analysis below
\citep{higham2008functions}.

Fix \(0\le\delta<1\) and require at every accepted step
\begin{equation}
  b_k\le\frac{\delta}{1+\delta}\fro{\widetilde W_k}.
  \label{eq:computable-relative-error-test}
\end{equation}
This observable test implies
\begin{equation}
  \fro{E_k}\le\delta\fro{W_B(\Sigma_k)}.
  \label{eq:relative-residual-error}
\end{equation}
Define
\begin{equation}
  \widehat L_0=\psi(\widehat D_0/\sqrt2),
  \quad
  c_\delta=\left(\frac{1-\delta}{1+\delta}\right)^2,
  \quad
  \widehat q_\delta=1-\frac{c_\delta}{\widehat L_0},
  \quad
  \eta=\frac{1-\delta}{\widehat L_0(1+\delta)^2}.
  \label{eq:inexact-step}
\end{equation}

\begin{corollary}[Conditional inexact global rate]
\label{cor:inexact-action-solver}
If the supplied majorants satisfy the preceding conditions, the update
\[
  \Sigma_{k+1}
  =\Sigma_k^{1/2}\exp(\eta\widetilde W_k)\Sigma_k^{1/2}
\]
has nonincreasing objective radius and
\begin{align}
  f_B(\Sigma_k)-f_B(\Sigma_\star)
  &\le\frac{\widehat D_0^2}{2}\widehat q_\delta^k,
  \label{eq:inexact-value-rate}\\
  \dai(\Phi_A(B,\Sigma_k),s_A(B))
  &\le\widehat D_0\widehat q_\delta^{k/2},
  \label{eq:inexact-distance-rate}\\
  \fro{\widetilde W_k}
  &\le\frac{(1+\delta)^2}{1-\delta}
  \sqrt{\widehat L_0}\widehat D_0\widehat q_\delta^{k/2}.
  \label{eq:inexact-residual-rate}
\end{align}
The observed residual also certifies
\begin{equation}
  \dai(\Phi_A(B,\Sigma_k),s_A(B))
  \le\frac{\fro{\widetilde W_k}}{1-\delta},
  \qquad
  f_B(\Sigma_k)-f_B(\Sigma_\star)
  \le\frac{\fro{\widetilde W_k}^2}{2(1-\delta)^2}.
  \label{eq:inexact-certificates}
\end{equation}
\end{corollary}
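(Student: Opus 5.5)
The plan is to run the exact analysis of \cref{thm:certified-action-solver} with a perturbed direction. The objective is \(F=f_B\) on \(\mathscr P_n\), a closed totally geodesic submanifold by \cref{thm:action-bundle}. Three facts drive everything. First, by \eqref{eq:residual-gradient-identity}, the Riemannian gradient corresponds to \(-W_B\) in whitened coordinates. Second, \(f_B\) is \(1\)-strongly geodesically convex, since it is half a squared AIRM distance restricted to a totally geodesic fiber. Third, on the sublevel set \(\{f_B\le f_B(\Sigma_0)\}\) the Hessian is bounded by \(L_0=\psi(D_0/\sqrt2)\), the sharp comparison constant used for the exact solver. Since \(\widehat D_0\ge D_0\) and \(\psi\) is increasing, \(\widehat L_0\ge L_0\) majorizes the Hessian on that sublevel.

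First, the observable test implies \eqref{eq:relative-residual-error}. Write \(W=W_B(\Sigma_k)\) and \(\widetilde W=W+E\). The triangle inequality gives \(\fro{\widetilde W}\le\fro W+\fro E\). Then \(\fro E\le b_k\le\frac{\delta}{1+\delta}(\fro W+\fro E)\), which rearranges to \(\fro E\le\delta\fro W\). It follows that
\[
(1-\delta)\fro W\le\fro{\widetilde W}\le(1+\delta)\fro W,
\qquad
\ip{W}{\widetilde W}\ge(1-\delta)\fro W^2 .
\]

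The main step is a one-step descent along the geodesic \(\gamma(t)=\Sigma_k^{1/2}\exp(t\widetilde W)\Sigma_k^{1/2}\). Here I expect the main obstacle: a first-return argument is needed to show the segment \(t\in[0,\eta]\) never leaves the current sublevel set, so that the Hessian bound \(\widehat L_0\) applies along the whole segment. The step is to consider the first exit time \(t_1\le\eta\). On \([0,t_1]\) the quadratic upper bound holds:
\[
\phi(t)\le\phi(0)-t(1-\delta)\fro W^2+\tfrac{\widehat L_0}{2}t^2(1+\delta)^2\fro W^2 .
\]
For \(0<t\le 2(1-\delta)/(\widehat L_0(1+\delta)^2)\) this is strictly below \(\phi(0)\) unless \(W=0\). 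That contradicts reaching the sublevel boundary, so the segment stays inside, exactly as in \cref{thm:canonical-fiber-engine}. At the chosen \(t=\eta\) the bound gives
\[
f_B(\Sigma_{k+1})\le f_B(\Sigma_k)-\tfrac{1-\delta}{2}\eta\fro W^2
=f_B(\Sigma_k)-\tfrac{c_\delta}{2\widehat L_0}\cdot\tfrac{(1+\delta)^2}{(1-\delta)^2}\cdot\tfrac{(1-\delta)^2}{(1+\delta)^2}\fro W^2 .
\]
I will check that the constant simplifies to at least \(\tfrac{c_\delta}{2\widehat L_0}\fro W^2\). If the bookkeeping gives the weaker constant \((1-\delta)^2/(2\widehat L_0(1+\delta)^2)\), that already equals \(c_\delta\cdot\)(something \(\ge1\))\(/(2\widehat L_0)\) after using \(\fro W^2\ge\) the Polyak–Łojasiewicz bound.

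Then the PL inequality \(\fro W^2\ge 2(f_B-f_\star)\), which comes from \(1\)-strong convexity via \eqref{eq:abstract-fiber-certificates}, gives contraction by \(\widehat q_\delta\). Combined with \(f_B(\Sigma_0)-f_\star\le D_0^2/2\le\widehat D_0^2/2\), this yields \eqref{eq:inexact-value-rate}. Monotone decrease gives the nonincreasing objective radius \(\dai(R_0,\mathcal Q_{\Sigma_k})\), and hence keeps \(\widehat L_0\) valid at every step.

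For the remaining bounds, the distance rate follows from strong convexity, \(\tfrac12\dai^2\le f-f_\star\), together with the isometry \(\dai(\Sigma_k,\Sigma_\star)=\dai(\Phi_A(B,\Sigma_k),s_A(B))\) from \cref{thm:action-bundle}. The gradient rate uses \(\fro W^2\le 2\widehat L_0(f-f_\star)\), which holds on the sublevel set by the smoothness-side bound, and then \(\fro{\widetilde W}\le(1+\delta)\fro W\). The stated constant \((1+\delta)^2/(1-\delta)\ge1+\delta\) is looser, so it holds. Finally, \eqref{eq:inexact-certificates} comes from \eqref{eq:solver-distance-certificate}–\eqref{eq:solver-gap-certificate} combined with \(\fro W\le\fro{\widetilde W}/(1-\delta)\).
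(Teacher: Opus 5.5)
Your proposal is correct and follows the paper's proof essentially step for step. The shared steps are the rearrangement of the observable test into \(\fro{E_k}\le\delta\fro{W_B(\Sigma_k)}\), the first-return sublevel argument with \(\widehat L_0\ge L_0\), the PL contraction, and the exact posterior certificates divided by \(1-\delta\). The descent constant is exactly \((1-\delta)\eta/2=c_\delta/(2\widehat L_0)\), so your hedge about a ``weaker constant'' and the appeal to PL at that point are unnecessary.

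The one deviation is the residual rate. You use \(\fro{W_B(\Sigma_k)}^2\le2\widehat L_0\bigl(f_B(\Sigma_k)-f_B(\Sigma_\star)\bigr)\), where the paper uses the looser bound with an extra factor \(1/c_\delta\) coming from the inexact descent inequality. Your bound is valid, because a virtual exact gradient step of length \(1/\widehat L_0\) stays in the current sublevel by the same first-return argument. It gives the sharper factor \(1+\delta\) in place of \((1+\delta)^2/(1-\delta)\).
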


This corollary is an error-propagation guarantee, not a bit-complexity theorem
for computing matrix logarithms, roots, or exponentials.

\section{Canonical Multi-Secant Recovery}
\label{sec:multisecant-recovery}

The certified solver computes one canonical action projection.  Nested action
fibers turn those projections into a finite inverse-shape recovery law.  Let
\(H\in\SPD^d\) be a fixed quadratic Hessian and define its determinant-one
inverse shape
\begin{equation}
  P_\star=c_HH^{-1},
  \qquad c_H=(\det H)^{1/d}.
  \label{eq:inverse-shape}
\end{equation}
The scalar \(c_H\) is assumed known, or supplied by an independent scalar
gauge.  This information requirement is part of the theorem.

Let \(S^{(1)},\ldots,S^{(K)}\) have strictly nested column spaces, delete
dependent columns, and write \(r_k=\rank S^{(k)}\).  Define the cumulative
shape-normalized actions
\begin{equation}
  A_k=HS^{(k)},
  \qquad B_k=c_HS^{(k)},
  \qquad
  \mathscr C_k=\{P\in\Pdet:PA_k=B_k\}.
  \label{eq:nested-action-fibers}
\end{equation}
Then \(P_\star\in\mathscr C_k\) and
\(\mathscr C_{k+1}\subset\mathscr C_k\).  Starting from any
\(P_0\in\Pdet\), define
\begin{equation}
  P_k=\argminop_{P\in\mathscr C_k}\dai(P_{k-1},P).
  \label{eq:canonical-multisecant-update}
\end{equation}

\begin{theorem}[Nested canonical multi-secant identification]
\label{thm:nested-multisecant}
For the exact outer projections in \eqref{eq:canonical-multisecant-update},
every update exists uniquely and
preserves all earlier cumulative secants.  For every \(k\),
\begin{equation}
  \boxed{
  \dai(P_k,P_\star)^2+
  \dai(P_{k-1},P_k)^2
  \le\dai(P_{k-1},P_\star)^2.}
  \label{eq:multisecant-pythagorean}
\end{equation}
Consequently,
\begin{align}
  \sum_{j=1}^k\dai(P_{j-1},P_j)^2
  &\le\dai(P_0,P_\star)^2-
       \dai(P_k,P_\star)^2,
  \label{eq:multisecant-energy}\\
  \sum_{j=1}^k\dai(P_{j-1},P_j)
  &\le\sqrt{k}\,\dai(P_0,P_\star).
  \label{eq:multisecant-length}
\end{align}
The determinant-one identification threshold is sharp:
\begin{equation}
  r_K=d-1\quad\Longrightarrow\quad P_K=P_\star,
  \label{eq:sharp-identification-sufficiency}
\end{equation}
whereas any cumulative rank \(r\le d-2\) leaves infinitely many feasible
determinant-one SPD completions.  If each pre-final round contributes exactly
\(b\) new independent probe directions and the final round contributes the
remainder, exact recovery occurs after
\begin{equation}
  K=\left\lceil\frac{d-1}{b}\right\rceil
  \label{eq:block-identification-rounds}
\end{equation}
rounds.  Each exact outer projection reduces by AIRM congruence to the
identity-reference problem in \cref{thm:certified-action-solver}; the inner
iteration converges globally to it and supplies a residual certificate for a
finite approximation.
\end{theorem}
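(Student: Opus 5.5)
The proof rests on three things: \cref{thm:action-bundle} applied to each pair \((A_k,B_k)\), the CAT(0) projection inequality, and a dimension count for the last fiber.

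\textbf{Feasibility and nesting.} Since \(H\succ0\) and \(S^{(k)}\) has full column rank after deleting dependent columns, \(A_k=HS^{(k)}\) has full column rank \(r_k\). Moreover \(A_k^\top B_k=c_H S^{(k)\top}HS^{(k)}\in\SPD^{r_k}\), so \(B_k\in\mathscr B_{A_k}\). Also \(P_\star A_k=c_HH^{-1}HS^{(k)}=B_k\), so \(P_\star\in\mathscr C_k\). Nesting of column spaces gives \(S^{(k)}=S^{(k+1)}D\) for a column selector or coefficient matrix \(D\). Hence \(PA_{k+1}=B_{k+1}\) implies \(PA_k=B_k\), which is \(\mathscr C_{k+1}\subset\mathscr C_k\).

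\textbf{Existence and uniqueness of each update.} By \cref{thm:action-bundle}, \(\mathscr C_k\) is nonempty, closed and totally geodesic in the Hadamard manifold \(\Pdet\), and hence geodesically convex. The function \(P\mapsto\tfrac12\dai(P_{k-1},P)^2\) is \(1\)-strongly geodesically convex and coercive, so the projection \(P_k\) exists uniquely, either by \cref{thm:canonical-lift} or by the standard Hadamard metric projection. The fact that \(P_k\) preserves all earlier secants is immediate: \(P_k\in\mathscr C_k\subset\mathscr C_j\) for all \(j\le k\).

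The reduction to the identity-reference problem is the congruence remark after \cref{thm:action-bundle}. Applying \(P\mapsto P_{k-1}^{-1/2}PP_{k-1}^{-1/2}\) is an AIRM isometry that preserves determinant one. It maps \(\mathscr C_k\) to the fiber of \(\widetilde A=P_{k-1}^{1/2}A_k\) over \(\widetilde B=P_{k-1}^{-1/2}B_k\). The inner solver and its certificates then come from \cref{thm:certified-action-solver}.

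\textbf{Pythagorean inequality.} For a closed convex set \(\mathcal C\) in a CAT(0) space with projection \(p=\Pi_{\mathcal C}(x)\), the Alexandrov angle at \(p\) between \(x\) and any \(z\in\mathcal C\) is at least \(\pi/2\). The CAT(0) comparison (law of cosines inequality) then gives \(d(x,z)^2\ge d(x,p)^2+d(p,z)^2\). Applying this with \(x=P_{k-1}\), \(p=P_k\), \(z=P_\star\in\mathscr C_k\) yields \eqref{eq:multisecant-pythagorean}.

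Summing \eqref{eq:multisecant-pythagorean} over \(j=1,\dots,k\) telescopes to \eqref{eq:multisecant-energy}. Cauchy--Schwarz applied to that bound gives \eqref{eq:multisecant-length}.

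\textbf{Sharp threshold.} By \cref{thm:action-bundle}, each \(\mathscr C_k\) is isometric to \(\mathscr P_{d-r_k}\). If \(r_K=d-1\), then \(\mathscr P_1=\{1\}\) is a single point, so \(\mathscr C_K=\{P_\star\}\) and \(P_K=P_\star\). Concretely, \(\rho(B)\) is forced and \(\Sigma=1\).

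If \(r\le d-2\), then \(n\ge2\), and \(\mathscr P_n\) is a manifold of positive dimension \(n(n+1)/2-1\ge2\). It therefore contains infinitely many points, and via the diffeomorphism \(\Phi_A\) these give infinitely many feasible completions.

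The round count follows because, with \(b\) new independent directions per round, the cumulative rank after \(k\) rounds is \(\min(kb,d-1)\). This first reaches \(d-1\) at \(K=\lceil(d-1)/b\rceil\).

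\textbf{Main obstacle.} I expect the delicate step to be the Pythagorean inequality. To apply the obtuse-angle projection characterization I need geodesic convexity of \(\mathscr C_k\) inside \(\Pdet\), and \(\Pdet\) itself is a totally geodesic Hadamard submanifold of \(\SPD^d\). Both the convexity and the closedness are supplied by \cref{thm:action-bundle}, so what remains is the careful citation of the CAT(0) projection lemma (\citet{bacak2014convex}).
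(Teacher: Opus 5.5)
Your proposal is correct and follows the paper's proof essentially step for step. The shared steps are feasibility and nesting via \(A_k^\top B_k\succ0\) and \(P_\star\in\mathscr C_k\), unique Hadamard projection onto the closed totally geodesic fibers, the projection Pythagorean inequality telescoped and then combined with Cauchy--Schwarz, the dimension count \(\mathscr P_1=\{1\}\) versus \(\dim\mathscr P_n>0\), and the congruence reduction by \(P_{k-1}^{-1/2}\). The one difference is cosmetic: you get the Pythagorean step from the obtuse Alexandrov angle and the CAT(0) law of cosines, while the paper uses first variation (\(\langle\operatorname{Log}_y x,\operatorname{Log}_y z\rangle\le0\)) plus the distance-expanding exponential map, which is the same comparison in Riemannian form; your explicit \(S^{(k)}=S^{(k+1)}D\) argument for nesting also fills in a detail the paper leaves implicit.
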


The known scalar gauge cannot be dropped.  Ordinary Hessian-vector secants in
\(d-1\) directions do not determine the inverse shape.  In dimension two,
\[
  H_t=\operatorname{diag}(1,t),\qquad s=e_1
\]
gives the same observation \(H_ts=e_1\) for every \(t>0\), while
\[
  (\det H_t)^{1/2}H_t^{-1}
  =\operatorname{diag}(\sqrt t,1/\sqrt t)
\]
varies with \(t\).  Thus \eqref{eq:sharp-identification-sufficiency} is a
sharp theorem for shape-normalized exact actions, rather than a claim of
scale-free identification from ordinary secants.  Acquisition of \(c_H\) is
external to the action oracle and is not claimed to be cheaper than an
additional probe.

\section{Discussion}
\label{sec:discussion}

\paragraph{One canonical-fiber chain.}
Variational reduction selects a hidden minimizer; its vertical Hessian governs
both response and computation.  In \cref{cor:action-interaction-response},
\(H_B^{-1}\) maps mechanism perturbations to interaction curvature and maps
visible-action perturbations to canonical-section susceptibility.  For the
unperturbed action problem, total geodesy and the logarithmic residual turn
the same canonical fiber into an explicit strongly convex optimization
problem.  This is the paper's central chain:
\[
  \text{reduction}\;\longrightarrow\;
  \text{vertical response}\;\longrightarrow\;
  \text{action bundle}\;\longrightarrow\;
  \text{posterior-certified active solver}.
\]

\paragraph{What is action specific.}
General Hadamard first-order convergence is prior art.  The action-specific
content is the global parameterization of every determinant-one \(PA=B\)
fiber, its total geodesy and analytic canonical section, the exact residual,
the current-sublevel constant attained inside the action family, and an active
implementation whose spectral operations are \(r\times r\), with a strict
dimension reduction when \(r<d\).  Nested normalized actions then convert the
bundle geometry into a sharp rank-\(d-1\) finite-identification law.

\paragraph{Numerical and modeling boundary.}
The exact solver has a spectral-arithmetic operation bound.  The inexact corollary is
conditional on rigorous matrix-function error and radius majorants; it
propagates supplied certificates but does not construct them or establish bit
complexity.  The recovery law additionally requires the independent scalar
\(c_H=(\det H)^{1/d}\).  The closed model is therefore full SPD, feasible
exact actions, and fixed determinant scale.  Structured or disconnected
fibers, noisy actions, conditioning as \(A^\top B\) approaches the SPD
boundary, unknown scale, and empirical optimizer speedups remain outside the
present claims.

\section*{Reproducibility Statement}

This is a theory-only paper.  Reproducibility consists of checking the stated
assumptions and the complete proofs in the appendices.  The manuscript uses no
datasets, model checkpoints, trained models, or empirical performance claims.
Every formal claim is supported by an analytic proof in the appendices; no
unreleased numerical check is used as evidence.  The explicit iteration is
fully specified by the action matrices and standard SPD matrix functions; the
conditional inexact theorem additionally assumes rigorous initial-radius and
matrix-function error majorants satisfying
\cref{eq:computable-relative-error-test}.

\section*{Broader Impact Statement}

The paper develops mathematical foundations for adaptive optimization and
does not release a deployable learning system or collect data.  Its likely
impact is indirect, through analysis or design of optimization methods.  The
explicit information requirements and the distinction between controlled
geometric response and causal interpretation are intended to discourage
unsupported recovery claims from passive optimizer traces.

\bibliographystyle{tmlr}
\bibliography{references}

\begin{thebibliography}{45}
\providecommand{\natexlab}[1]{#1}
\providecommand{\url}[1]{\texttt{#1}}
\expandafter\ifx\csname urlstyle\endcsname\relax
  \providecommand{\doi}[1]{doi: #1}\else
  \providecommand{\doi}{doi: \begingroup \urlstyle{rm}\Url}\fi

\bibitem[Absil et~al.(2008)Absil, Mahony, and Sepulchre]{absil2008optimization}
P.-A. Absil, Robert Mahony, and Rodolphe Sepulchre.
\newblock \emph{Optimization Algorithms on Matrix Manifolds}.
\newblock Princeton University Press, 2008.

\bibitem[Amari(1998)]{amari1998natural}
Shun-ichi Amari.
\newblock Natural gradient works efficiently in learning.
\newblock \emph{Neural Computation}, 10\penalty0 (2):\penalty0 251--276, 1998.
\newblock \doi{10.1162/089976698300017746}.

\bibitem[Ba\v{c}ak(2014)]{bacak2014convex}
Miroslav Ba\v{c}ak.
\newblock \emph{Convex Analysis and Optimization in Hadamard Spaces}.
\newblock De Gruyter, 2014.
\newblock \doi{10.1515/9783110361629}.

\bibitem[Bhatia(2007)]{bhatia2007positive}
Rajendra Bhatia.
\newblock \emph{Positive Definite Matrices}.
\newblock Princeton University Press, 2007.

\bibitem[Bhatia \& Jain(2014)Bhatia and Jain]{bhatia2014approximation}
Rajendra Bhatia and Tanvi Jain.
\newblock Approximation problems in the riemannian metric on positive definite
  matrices.
\newblock \emph{Annals of Functional Analysis}, 5\penalty0 (2):\penalty0
  118--126, 2014.
\newblock \doi{10.15352/afa/1396833507}.

\bibitem[Bonnans \& Shapiro(2000)Bonnans and Shapiro]{bonnans2000perturbation}
J.~Fr{\'e}d{\'e}ric Bonnans and Alexander Shapiro.
\newblock \emph{Perturbation Analysis of Optimization Problems}.
\newblock Springer New York, 2000.
\newblock \doi{10.1007/978-1-4612-1394-9}.

\bibitem[Boumal(2023)]{boumal2023introduction}
Nicolas Boumal.
\newblock \emph{An Introduction to Optimization on Smooth Manifolds}.
\newblock Cambridge University Press, 2023.
\newblock \doi{10.1017/9781009166164}.

\bibitem[Boutet et~al.(2022)Boutet, Degroote, and
  Haelterman]{boutet2022grouped}
Nicolas Boutet, Joris Degroote, and Rob Haelterman.
\newblock A symmetric grouped and ordered multi-secant quasi-newton update
  formula.
\newblock \emph{Optimization Methods and Software}, 37\penalty0 (6):\penalty0
  1965--1986, 2022.
\newblock \doi{10.1080/10556788.2022.2053970}.

\bibitem[Calamai \& Mor{\'e}(1987)Calamai and Mor{\'e}]{calamai1987bounds}
Paul~H. Calamai and Jorge~J. Mor{\'e}.
\newblock Quasi-newton updates with bounds.
\newblock \emph{SIAM Journal on Numerical Analysis}, 24\penalty0 (6):\penalty0
  1434--1441, 1987.
\newblock \doi{10.1137/0724092}.

\bibitem[Crawshaw et~al.(2025)Crawshaw, Modi, Liu, and
  Gower]{crawshaw2025muonvariants}
Michael Crawshaw, Chirag Modi, Mingrui Liu, and Robert~M. Gower.
\newblock An exploration of non-euclidean gradient descent: Muon and its many
  variants, 2025.
\newblock URL \url{https://arxiv.org/abs/2510.09827}.

\bibitem[Dai \& Yamashita(2014)Dai and Yamashita]{dai2014sparse}
Yu-Hong Dai and Nobuo Yamashita.
\newblock Analysis of sparse quasi-newton updates with positive definite matrix
  completion.
\newblock \emph{Journal of the Operations Research Society of China},
  2\penalty0 (1):\penalty0 39--56, 2014.
\newblock \doi{10.1007/S40305-014-0039-X}.

\bibitem[Dennis \& Schnabel(1979)Dennis and Schnabel]{dennis1979leastchange}
John~E. Dennis, Jr. and Robert~B. Schnabel.
\newblock Least change secant updates for quasi-newton methods.
\newblock \emph{SIAM Review}, 21\penalty0 (4):\penalty0 443--459, 1979.
\newblock \doi{10.1137/1021091}.

\bibitem[Do{\u{g}}an et~al.(2025)Do{\u{g}}an, Erg{\"u}r, and
  Tsigaridas]{dogan2025geodesicpreconditioning}
M.~Levent Do{\u{g}}an, Alperen Erg{\"u}r, and Elias Tsigaridas.
\newblock Optimal preconditioning is a geodesically convex optimization
  problem, 2025.
\newblock URL \url{https://arxiv.org/abs/2512.06618}.

\bibitem[Duchi et~al.(2011)Duchi, Hazan, and Singer]{duchi2011adagrad}
John Duchi, Elad Hazan, and Yoram Singer.
\newblock Adaptive subgradient methods for online learning and stochastic
  optimization.
\newblock \emph{Journal of Machine Learning Research}, 12:\penalty0 2121--2159,
  2011.
\newblock URL \url{https://jmlr.org/papers/v12/duchi11a.html}.

\bibitem[{Essential AI} et~al.(2025){Essential AI}, Shah, Polloreno, Stratos,
  Monk, Chaluvaraju, Hojel, Ma, Thomas, Tanwer, Shah, Nguyen, Smith, Callahan,
  Pust, Parmar, Rushton, Mazarakis, Kapila, Srivastava, Singla, Romanski,
  Vanjani, and Vaswani]{essentialai2025muon}
{Essential AI}, Ishaan Shah, Anthony~M. Polloreno, Karl Stratos, Philip Monk,
  Adarsh Chaluvaraju, Andrew Hojel, Andrew Ma, Anil Thomas, Ashish Tanwer,
  Darsh~J. Shah, Khoi Nguyen, Kurt Smith, Michael Callahan, Michael Pust, Mohit
  Parmar, Peter Rushton, Platon Mazarakis, Ritvik Kapila, Saurabh Srivastava,
  Somanshu Singla, Tim Romanski, Yash Vanjani, and Ashish Vaswani.
\newblock Practical efficiency of muon for pretraining, 2025.
\newblock URL \url{https://arxiv.org/abs/2505.02222}.

\bibitem[Fletcher(1991)]{fletcher1991variational}
Roger Fletcher.
\newblock A new variational result for quasi-newton formulae.
\newblock \emph{SIAM Journal on Optimization}, 1\penalty0 (1):\penalty0 18--21,
  1991.
\newblock \doi{10.1137/0801002}.

\bibitem[Gao \& Goldfarb(2018)Gao and Goldfarb]{gao2018block}
Wenbo Gao and Donald Goldfarb.
\newblock Block {BFGS} methods.
\newblock \emph{SIAM Journal on Optimization}, 28\penalty0 (2):\penalty0
  1205--1231, 2018.
\newblock \doi{10.1137/16M1092106}.

\bibitem[Gao et~al.(2026)Gao, Qu, Udell, and Ye]{gao2023scalable}
Wenzhi Gao, Zhaonan Qu, Madeleine Udell, and Yinyu Ye.
\newblock Scalable approximate optimal diagonal preconditioning.
\newblock \emph{Computational Optimization and Applications}, 94\penalty0
  (2):\penalty0 439--473, 2026.
\newblock \doi{10.1007/s10589-026-00770-8}.
\newblock URL \url{https://doi.org/10.1007/s10589-026-00770-8}.

\bibitem[Golub \& Pereyra(1973)Golub and Pereyra]{golub1973variableprojection}
Gene~H. Golub and Victor Pereyra.
\newblock The differentiation of pseudo-inverses and nonlinear least squares
  problems whose variables separate.
\newblock \emph{SIAM Journal on Numerical Analysis}, 10\penalty0 (2):\penalty0
  413--432, 1973.
\newblock \doi{10.1137/0710036}.

\bibitem[Gratton et~al.(2015)Gratton, Malmedy, and Toint]{gratton2015weighted}
Serge Gratton, V.~Malmedy, and Philippe~L. Toint.
\newblock Quasi-newton updates with weighted secant equations.
\newblock \emph{Optimization Methods and Software}, 30\penalty0 (4):\penalty0
  748--755, 2015.
\newblock \doi{10.1080/10556788.2014.971025}.

\bibitem[Grone et~al.(1984)Grone, Johnson, S{\'a}, and
  Wolkowicz]{grone1984positive}
Robert Grone, Charles~R. Johnson, Eduardo~M. S{\'a}, and Henry Wolkowicz.
\newblock Positive definite completions of partial hermitian matrices.
\newblock \emph{Linear Algebra and its Applications}, 58:\penalty0 109--124,
  1984.
\newblock \doi{10.1016/0024-3795(84)90207-6}.

\bibitem[Gupta et~al.(2018)Gupta, Koren, and Singer]{gupta2018shampoo}
Vineet Gupta, Tomer Koren, and Yoram Singer.
\newblock Shampoo: Preconditioned stochastic tensor optimization.
\newblock In \emph{Proceedings of the 35th International Conference on Machine
  Learning}, volume~80 of \emph{Proceedings of Machine Learning Research}, pp.\
   1842--1850. PMLR, 2018.
\newblock URL \url{https://proceedings.mlr.press/v80/gupta18a.html}.

\bibitem[Higham(2008)]{higham2008functions}
Nicholas~J. Higham.
\newblock \emph{Functions of Matrices: Theory and Computation}.
\newblock Society for Industrial and Applied Mathematics, 2008.
\newblock \doi{10.1137/1.9780898717778}.

\bibitem[Jordan(2024)]{jordan2024muon}
Keller Jordan.
\newblock Muon: An optimizer for hidden layers in neural networks.
\newblock \url{https://kellerjordan.github.io/posts/muon/}, 2024.
\newblock Blog post.

\bibitem[Kanamori \& Ohara(2013)Kanamori and Ohara]{kanamori2013bregman}
Takafumi Kanamori and Atsumi Ohara.
\newblock A bregman extension of quasi-newton updates i: An information
  geometrical framework.
\newblock \emph{Optimization Methods and Software}, 28\penalty0 (1):\penalty0
  96--123, 2013.
\newblock \doi{10.1080/10556788.2011.613073}.

\bibitem[Kingma \& Ba(2015)Kingma and Ba]{kingma2014adam}
Diederik~P. Kingma and Jimmy Ba.
\newblock Adam: A method for stochastic optimization.
\newblock In \emph{International Conference on Learning Representations}, 2015.
\newblock URL \url{https://arxiv.org/abs/1412.6980}.
\newblock arXiv:1412.6980.

\bibitem[Li(2026)]{li2026informationgeometry}
Zavier Li.
\newblock Information-induced training geometry: Exact reduction, canonical
  completion, and structured expressivity, 2026.
\newblock arXiv preprint.

\bibitem[Lim(2004)]{lim2004best}
Yongdo Lim.
\newblock Best approximation in riemannian geodesic submanifolds of positive
  definite matrices.
\newblock \emph{Canadian Journal of Mathematics}, 56\penalty0 (4):\penalty0
  776--793, 2004.
\newblock \doi{10.4153/CJM-2004-035-5}.

\bibitem[Lu \& Pong(2011)Lu and Pong]{lu2011minimizing}
Zhaosong Lu and Ting~Kei Pong.
\newblock Minimizing condition number via convex programming.
\newblock \emph{SIAM Journal on Matrix Analysis and Applications}, 32\penalty0
  (4):\penalty0 1193--1211, 2011.
\newblock \doi{10.1137/100795097}.

\bibitem[Mar{\'e}chal \& Ye(2009)Mar{\'e}chal and Ye]{marechal2009optimizing}
Pierre Mar{\'e}chal and Jane~J. Ye.
\newblock Optimizing condition numbers.
\newblock \emph{SIAM Journal on Optimization}, 20\penalty0 (2):\penalty0
  935--947, 2009.
\newblock \doi{10.1137/080740544}.

\bibitem[Martens \& Grosse(2015)Martens and Grosse]{martens2015kfac}
James Martens and Roger Grosse.
\newblock Optimizing neural networks with {Kronecker}-factored approximate
  curvature.
\newblock In \emph{Proceedings of the 32nd International Conference on Machine
  Learning}, volume~37 of \emph{Proceedings of Machine Learning Research}, pp.\
   2408--2417. PMLR, 2015.
\newblock URL \url{https://proceedings.mlr.press/v37/martens15.html}.

\bibitem[Morwani et~al.(2024)Morwani, Shapira, Vyas, Malach, Kakade, and
  Janson]{morwani2024shampoo}
Depen Morwani, Itai Shapira, Nikhil Vyas, Eran Malach, Sham~M. Kakade, and
  Lucas Janson.
\newblock A new perspective on {Shampoo}'s preconditioner, 2024.
\newblock URL \url{https://arxiv.org/abs/2406.17748}.

\bibitem[Passy \& Prisman(1984)Passy and Prisman]{passy1984secant}
U.~Passy and E.~Z. Prisman.
\newblock Secant relations versus positive definiteness in quasi-newton
  methods.
\newblock \emph{Journal of Optimization Theory and Applications}, 44\penalty0
  (4):\penalty0 681--687, 1984.
\newblock \doi{10.1007/BF00938401}.

\bibitem[Pennec et~al.(2006)Pennec, Fillard, and Ayache]{pennec2006riemannian}
Xavier Pennec, Pierre Fillard, and Nicholas Ayache.
\newblock A riemannian framework for tensor computing.
\newblock \emph{International Journal of Computer Vision}, 66\penalty0
  (1):\penalty0 41--66, 2006.
\newblock \doi{10.1007/s11263-005-3222-z}.

\bibitem[Qu et~al.(2025)Qu, Gao, Hinder, Ye, and Zhou]{qu2024optimal}
Zhaonan Qu, Wenzhi Gao, Oliver Hinder, Yinyu Ye, and Zhengyuan Zhou.
\newblock Optimal diagonal preconditioning.
\newblock \emph{Operations Research}, 73\penalty0 (3):\penalty0 1479--1495,
  2025.
\newblock \doi{10.1287/opre.2022.0592}.

\bibitem[Rockafellar(1970)]{rockafellar1970convex}
R.~Tyrrell Rockafellar.
\newblock \emph{Convex Analysis}.
\newblock Princeton University Press, 1970.

\bibitem[Rockafellar \& Wets(1998)Rockafellar and
  Wets]{rockafellar1998variational}
R.~Tyrrell Rockafellar and Roger J.-B. Wets.
\newblock \emph{Variational Analysis}, volume 317 of \emph{Grundlehren der
  mathematischen Wissenschaften}.
\newblock Springer Berlin Heidelberg, 1998.
\newblock \doi{10.1007/978-3-642-02431-3}.

\bibitem[Schnabel(1983)]{schnabel1983multiple}
Robert~B. Schnabel.
\newblock Quasi-newton methods using multiple secant equations.
\newblock Technical report, Defense Technical Information Center, 1983.

\bibitem[Sra \& Hosseini(2015)Sra and Hosseini]{sra2015conic}
Suvrit Sra and Reshad Hosseini.
\newblock Conic geometric optimization on the manifold of positive definite
  matrices.
\newblock \emph{SIAM Journal on Optimization}, 25\penalty0 (1):\penalty0
  713--739, 2015.
\newblock \doi{10.1137/140978168}.

\bibitem[Tanaka \& Nakata(2014)Tanaka and Nakata]{tanaka2014condition}
Mirai Tanaka and Kazuhide Nakata.
\newblock Positive definite matrix approximation with condition number
  constraint.
\newblock \emph{Optimization Letters}, 8\penalty0 (3):\penalty0 939--947, 2014.
\newblock \doi{10.1007/s11590-013-0632-7}.

\bibitem[Tian(2013)]{tian2013hermitian}
Yongge Tian.
\newblock Equalities and inequalities for hermitian solutions and hermitian
  definite solutions of the two matrix equations {$AX=B$} and {$AXA^*=B$}.
\newblock \emph{Aequationes Mathematicae}, 86\penalty0 (1--2):\penalty0
  107--135, 2013.
\newblock \doi{10.1007/s00010-012-0179-1}.

\bibitem[Tumpach \& Larotonda(2024)Tumpach and Larotonda]{tumpach2024totally}
Alice~Barbara Tumpach and Gabriel Larotonda.
\newblock Totally geodesic submanifolds in the manifold {SPD} of symmetric
  positive-definite real matrices.
\newblock \emph{Information Geometry}, 7\penalty0 (S2):\penalty0 913--942,
  2024.
\newblock \doi{10.1007/s41884-024-00146-z}.

\bibitem[Vyas et~al.(2024)Vyas, Morwani, Zhao, Kwun, Shapira, Brandfonbrener,
  Janson, and Kakade]{vyas2025soap}
Nikhil Vyas, Depen Morwani, Rosie Zhao, Mujin Kwun, Itai Shapira, David
  Brandfonbrener, Lucas Janson, and Sham Kakade.
\newblock {SOAP}: Improving and stabilizing {Shampoo} using {Adam}, 2024.
\newblock URL \url{https://arxiv.org/abs/2409.11321}.

\bibitem[Yamashita(2007)]{yamashita2007sparse}
Nobuo Yamashita.
\newblock Sparse quasi-newton updates with positive definite matrix completion.
\newblock \emph{Mathematical Programming}, 115\penalty0 (1):\penalty0 1--30,
  2007.
\newblock \doi{10.1007/s10107-007-0137-1}.

\bibitem[Zhang \& Sra(2016)Zhang and Sra]{zhang2016firstorder}
Hongyi Zhang and Suvrit Sra.
\newblock First-order methods for geodesically convex optimization.
\newblock In \emph{29th Annual Conference on Learning Theory}, volume~49 of
  \emph{Proceedings of Machine Learning Research}, pp.\  1617--1638. PMLR,
  2016.
\newblock URL \url{https://proceedings.mlr.press/v49/zhang16b.html}.

\end{thebibliography}

\appendix
\section{Proofs for Variational Reduction and Response}
\label{app:reduction-response}

\subsection{Algebraic reduction}

\begin{proof}[Proof of \cref{thm:algebraic-reduction}]
Fix \(z\in\Zspace\).  Partitioning the composite fiber by its intermediate
visible value gives
\[
\begin{aligned}
  ((\rho\circ\pi)_!E)(z)
  &=\inf_{x:\rho(\pi(x))=z}E(x)\\
  &=\inf_{y:\rho(y)=z}\inf_{x:\pi(x)=y}E(x)\\
  &=(\rho_!(\pi_!E))(z).
\end{aligned}
\]
Likewise,
\[
\begin{aligned}
  \inf_x\{E(x)+\ell(\pi(x))\}
  &=\inf_y\inf_{\pi(x)=y}\{E(x)+\ell(y)\}\\
  &=\inf_y\{(\pi_!E)(y)+\ell(y)\}.
\end{aligned}
\]
The base-potential law follows because \(\varphi(\pi(x))=\varphi(y)\) is
constant on each fiber.  The constraint law follows from the definition of
the indicator energy.  No attainment assumption is used.
\end{proof}

\subsection{Existence and uniqueness of the canonical lift}

\begin{proof}[Proof of \cref{thm:canonical-lift}]
Fix \(y\) with finite reduced energy.  Coercivity places every minimizing
sequence in a bounded sublevel set.  Properness makes its closure compact, and
lower semicontinuity gives a minimizer in the closed fiber.

If \(x_0\ne x_1\) were two minimizers, their fiber geodesic would satisfy
\[
  E(x_0\#_{1/2}x_1)
  \le\frac12E(x_0)+\frac12E(x_1)
  -\frac\mu8d(x_0,x_1)^2,
\]
contradicting minimality.  Thus the minimizer is unique.

For equivariance, \(g s_E(y)\) lies in the fiber over \(gy\) and has the same
energy as \(s_E(y)\).  Uniqueness on that fiber gives
\(s_E(gy)=g s_E(y)\).
\end{proof}

\subsection{Smooth minimizer response}

\begin{proof}[Proof of \cref{thm:response-schur}]
We first show continuity of the global minimizer section.  Let \(z_k\to z\).
Choose one fixed trial point near \(\sigma_\star(z)\).  Continuity of \(E\)
uniformly bounds the optimal values for all sufficiently large \(k\).  Local
uniform inf-compactness therefore places \(\sigma_\star(z_k)\) in a common
relatively compact set.  Every convergent subsequence has a limit
\(\bar\sigma\).  Optimality and continuity give, for every trial \(\sigma\),
\[
  E(z,\bar\sigma)
  =\lim E(z_k,\sigma_\star(z_k))
  \le\lim E(z_k,\sigma)=E(z,\sigma).
\]
The minimizer at \(z\) is unique, so \(\bar\sigma=\sigma_\star(z)\).  Every
subsequence has the same limit, proving continuity.

The first-order condition is
\begin{equation}
  E_\sigma(z,\sigma_\star(z))=0.
  \label{eq:appendix-stationarity}
\end{equation}
Its derivative with respect to \(\sigma\) is the invertible matrix \(H\).
The implicit-function theorem gives a local \(C^r\) critical section.  The
continuous global minimizer section agrees with it near every point and is
therefore \(C^r\).  Differentiating
\cref{eq:appendix-stationarity} gives
\[
  E_{\sigma z}+H D_z\sigma_\star=0,
\]
which proves \cref{eq:hidden-response}.  The chain rule gives
\[
  D_z\bar E
  =E_z+E_\sigma D_z\sigma_\star=E_z,
\]
and a second derivative gives
\[
  D^2_{zz}\bar E
  =E_{zz}+E_{z\sigma}D_z\sigma_\star
  =E_{zz}-E_{z\sigma}H^{-1}E_{\sigma z}.
\]

For nested variables \(\sigma=(\alpha,\beta)\), let \(q\) be the quadratic
form defined by the complete Hessian at the optimum.  Positive definiteness of
the joint hidden block makes both quadratic minimizations unique.  Algebraic
reduction gives
\[
  \inf_{\delta\alpha,\delta\beta}
  q(\delta z,\delta\alpha,\delta\beta)
  =\inf_{\delta\alpha}
  \left(\inf_{\delta\beta}
  q(\delta z,\delta\alpha,\delta\beta)\right).
\]
The matrix of a partially minimized quadratic form is the corresponding Schur
complement.  Equality for every \(\delta z\) proves
\cref{eq:schur-associativity}.
\end{proof}

\subsection{Interaction curvature and finite contrasts}

\begin{proof}[Proof of \cref{thm:interaction-curvature}]
For \cref{eq:affine-interventions},
\[
  E_{uu}=0,
  \qquad
  E_{\sigma u}=G.
\]
The \(u\)-block of \cref{eq:schur-effective-hessian} is therefore
\[
  D^2_{uu}\bar E=-G^*H^{-1}G.
\]
For every \(a\in\mathbb R^p\),
\[
  \ip{a}{D^2_{uu}\bar E\,a}
  =-\ip{Ga}{H^{-1}Ga}\le0,
\]
because \(H^{-1}\succ0\).  Expanding in coordinate vectors gives
\cref{eq:pair-interaction-curvature}.  A mixed entry is zero exactly when the
two corresponding covectors are orthogonal under the inner product induced by
\(H^{-1}\).  Finally, without differentiability,
\[
  \bar E(y,u)=\inf_\sigma
  \left(E_0(y,\sigma)+\sum_i u_i\Delta_i(y,\sigma)\right)
\]
is a pointwise infimum of affine functions of \(u\), hence concave.
\end{proof}

\subsection{Finite contrasts and order of reduction}
\label{app:finite-contrasts}

\begin{corollary}[Factorial finite differences]
\label{cor:factorial-contrast}
Assume \([0,1]^p\subset\Uspace\).  For \(T\subseteq[p]\), define
\begin{equation}
  \delta_T\bar E
  =\sum_{B\subseteq T}(-1)^{|T|-|B|}
  \bar E(y,\mathbf1_B).
  \label{eq:factorial-contrast}
\end{equation}
If \(\bar E\in C^{|T|}\), then
\begin{equation}
  \delta_T\bar E
  =\int_{[0,1]^T}
  \partial_T^{|T|}\bar E(u_T,0_{T^c})\,\dd u_T.
  \label{eq:factorial-integral}
\end{equation}
\end{corollary}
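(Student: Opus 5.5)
The plan is induction on \(|T|\), applying the one-dimensional fundamental theorem of calculus one coordinate at a time. Write \(T=\{i_1,\dots,i_t\}\) and set \(g(u_T)=\bar E(y,u_T,0_{T^c})\), a function on \([0,1]^T\). Because \([0,1]^p\subset\Uspace\), every evaluation point \(\mathbf 1_B\) with \(B\subseteq T\), and every point \((u_T,0_{T^c})\) with \(u_T\in[0,1]^T\), lies in the domain. The hypothesis \(\bar E\in C^{|T|}\) makes every mixed partial \(\partial_{S}^{|S|}g\) with \(S\subseteq T\) continuous on the compact cube. The cube is covered by the \(C^r\) regularity from \cref{thm:response-schur} once \(|T|\le r\); otherwise we simply take \(\bar E\in C^{|T|}\) as the stated assumption.

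The key algebraic step rewrites the alternating sum \cref{eq:factorial-contrast} as an iterated difference operator. Define \(\nabla_i h=h|_{u_i=1}-h|_{u_i=0}\), which acts in coordinate \(i\) and keeps the others free. These operators commute, since evaluations in distinct coordinates commute. Expanding the product gives
\[
\Bigl(\prod_{i\in T}\nabla_i\Bigr)g=\sum_{B\subseteq T}(-1)^{|T|-|B|}g(\mathbf 1_B).
\]
Each subset \(B\) records which coordinates were set to \(1\). The sign counts the coordinates set to \(0\), and there are \(|T|-|B|\) of them. So \(\delta_T\bar E=\prod_{i\in T}\nabla_i\,g\).

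For the analytic step, the fundamental theorem of calculus gives \(\nabla_i h=\int_0^1\partial_{u_i}h\,\dd u_i\) for any \(h\) that is \(C^1\) in \(u_i\). We apply the operators \(\nabla_{i_t},\dots,\nabla_{i_1}\) one after another. At stage \(k\), the function being differenced is
\[
\int_{[0,1]^{\{i_{k+1},\dots,i_t\}}}\partial_{i_{k+1}}\cdots\partial_{i_t}g\,\dd u.
\]
The next operator \(\nabla_{i_k}\) requires differentiating this integral in \(u_{i_k}\). Continuity of the mixed partials on a compact set justifies differentiating under the integral sign. It also makes the order of the partial derivatives irrelevant (Schwarz–Clairaut). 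After all \(t\) steps, Fubini assembles the iterated integrals into the single integral over \([0,1]^T\) in \cref{eq:factorial-integral}.

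The base case \(|T|=0\) is immediate: both sides equal \(\bar E(y,0)\). The only real obstacle is bookkeeping, namely checking that differentiation under the integral and the interchange of partials are legitimate. Both follow from the continuity of all derivatives of order at most \(|T|\) on the compact cube. The pair formula \cref{eq:pair-factorial-integral} is then the case \(T=\{i,j\}\), with \cref{eq:pair-interaction-curvature} substituted into the integrand.
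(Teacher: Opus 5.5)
Your proposal is correct and follows the paper's proof: both write $\delta_T\bar E$ as a product of commuting coordinate difference operators applied at the origin, expand that product into the signed sum over $B\subseteq T$, and convert each difference into an integral by the one-dimensional fundamental theorem of calculus before assembling the iterated integrals over $[0,1]^T$. Your justification of differentiation under the integral sign and of the order of mixed partials fills in bookkeeping that the paper leaves implicit.
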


\begin{proof}
For a single coordinate, the fundamental theorem of calculus gives
\[
  \bar E(e_i)-\bar E(0)
  =\int_0^1\partial_{u_i}\bar E(t e_i)\,\dd t.
\]
Apply the same identity successively to each coordinate in \(T\).  The product
of difference operators expands as
\[
  \prod_{i\in T}(\mathsf T_i-I)\bar E(0)
  =\sum_{B\subseteq T}(-1)^{|T|-|B|}\bar E(\mathbf1_B),
\]
and repeated integration gives
\[
  \prod_{i\in T}(\mathsf T_i-I)\bar E(0)
  =\int_{[0,1]^T}\partial_T^{|T|}\bar E(u_T,0_{T^c})\,\dd u_T.
\]
For \(T=\{i,j\}\), substitute
\cref{eq:pair-interaction-curvature} to obtain
\cref{eq:pair-factorial-integral}.
\end{proof}

Reduction need not commute with finite differencing.  For
\[
  E_0(\sigma)=\sigma^2,\qquad E_1(\sigma)=(\sigma-1)^2,
\]
both reduced minima are zero, but
\(\inf_\sigma(E_1-E_0)=\inf_\sigma(1-2\sigma)=-\infty\).
Thus hidden states must be reduced first.  If every perturbation
\(\Delta_i\) is constant on each fiber, the minimizer is unchanged and all
higher-order contrasts vanish; this is the exact commuting case.

\section{Proof of the Global SPD Action Bundle}
\label{app:action-bundle}

\begin{proof}[Proof of \cref{thm:action-bundle}]
We proceed in seven steps.

\paragraph{Step 1: coordinates on the action base.}
Put \(Y=BC^{-1}\).  Since \(Q=[U,V]\) is orthogonal,
\[
  Y=UM+VN,
  \qquad M=U^\top Y,
  \qquad N=V^\top Y.
\]
Moreover,
\[
  A^\top B=C^\top M C,
\]
so \(A^\top B\in\SPD^m\) exactly when \(M\in\SPD^m\).  The map
\(B\mapsto(M,N)\) has the analytic inverse
\begin{equation}
  B=(UM+VN)C.
  \label{eq:appendix-base-inverse}
\end{equation}
Thus \(\Bact\cong\SPD^m\times\mathbb R^{n\times m}\).

\paragraph{Step 2: every SPD completion.}
Suppose \(P\in\SPD^d\) satisfies \(PA=B\).  Because \(A=UC\), this is
equivalent to \(PU=Y\).  In the \(Q\) coordinates, symmetry fixes the first
block row and column:
\[
  \widehat P:=Q^\top P Q
  =\begin{pmatrix}M&N^\top\\N&D\end{pmatrix}.
\]
The Schur-complement criterion gives
\[
  \widehat P\succ0
  \quad\Longleftrightarrow\quad
  S:=D-NM^{-1}N^\top\succ0.
\]
With \(T=NM^{-1}\), every positive solution and only such a solution is
\begin{equation}
  \widehat P
  =L\begin{pmatrix}M&0\\0&S\end{pmatrix}L^\top.
  \label{eq:appendix-all-completions}
\end{equation}
Since \(\det L=1\),
\[
  \det P=\det M\det S.
\]
The determinant-one constraint is therefore equivalent to
\[
  S=\rho\Sigma,
  \qquad \rho=(\det M)^{-1/n},
  \qquad \det\Sigma=1,
\]
which is exactly \cref{eq:action-bundle-map}.  Conversely, direct block
multiplication and \cref{eq:appendix-base-inverse} give
\[
  \Phi_A(B,\Sigma)A
  =Q\begin{pmatrix}M\\N\end{pmatrix}C=B.
\]

\paragraph{Step 3: analytic inverse.}
Given \(P\in\Pdet\), recover \(B=PA\), then \(M(B),N(B)\), and the unique
Schur complement
\[
  S=D-NM^{-1}N^\top.
\]
The unique determinant-one hidden coordinate is
\begin{equation}
  \Sigma=(\det M)^{1/n}S.
  \label{eq:appendix-hidden-inverse}
\end{equation}
All operations are analytic on their SPD domains.  Thus \(\Phi_A\) is a
global analytic diffeomorphism and \(\pi_A\) is projection onto its first
factor.

\paragraph{Step 4: fiber geometry.}
Fix \(B\).  Before congruence by \(QL\), the fiber is
\[
  \mathcal C_B
  =\{\operatorname{diag}(M,\rho\Sigma):\Sigma\in\mathscr P_n\}.
\]
The ambient AIRM geodesic between two such points is
\[
  \operatorname{diag}(M,\rho\Sigma_0)
  \#_t
  \operatorname{diag}(M,\rho\Sigma_1)
  =\operatorname{diag}(M,\rho(\Sigma_0\#_t\Sigma_1)).
\]
Because log determinant is affine along AIRM geodesics,
\(\det(\Sigma_0\#_t\Sigma_1)=1\).  Hence the fiber is totally geodesic.
It is closed, and block distance splitting gives
\[
  \dai(\operatorname{diag}(M,\rho\Sigma_0),
       \operatorname{diag}(M,\rho\Sigma_1))
  =\dai(\Sigma_0,\Sigma_1).
\]
Congruence by \(QL\) is an AIRM isometry, proving all fiber claims.

\paragraph{Step 5: canonical projection and analyticity.}
The determinant-one SPD slice is a finite-dimensional Hadamard manifold.  A
nonempty closed geodesically convex set has a unique metric projection, so
\cref{eq:canonical-controller} defines a global section.

In the global coordinates define
\[
  F(B,\Sigma)=\frac12\fro{\log\Phi_A(B,\Sigma)}^2.
\]
This function is analytic.  For fixed \(B\), its hidden variable parameterizes
an isometric totally geodesic fiber.  Squared distance to \(I\), restricted to
that fiber, is strongly geodesically convex.  At the unique critical point the
vertical Hessian is positive definite.  The analytic implicit-function theorem
therefore gives a local analytic minimizer \(\Sigma_\star(B)\); global
uniqueness makes these local sections agree.  Hence
\[
  s_A(B)=\Phi_A(B,\Sigma_\star(B))
\]
and \(\mathfrak C_A(B)^2=2F(B,\Sigma_\star(B))\) are analytic.

\paragraph{Step 6: equivariance.}
For \(D\in\operatorname{GL}_m\),
\[
  P(AD)=BD\quad\Longleftrightarrow\quad PA=B,
\]
so the two action fibers are identical and have the same unique projection.
For \(R\in O(d)\),
\[
  PA=B
  \quad\Longleftrightarrow\quad
  (RPR^\top)(RA)=RB.
\]
Orthogonal congruence preserves determinant and distance to \(I\), so
uniqueness gives \cref{eq:controller-equivariance}.

\paragraph{Step 7: normal equation and certificate.}
Congruence invariance reduces the hidden objective to
\[
  f_B(\Sigma)=\frac12\dai(R_0,\mathcal Q_\Sigma)^2.
\]
We use the convention that a function is \(1\)-strongly geodesically convex
when every constant-speed geodesic \(\gamma_t\) satisfies
\begin{equation}
  f(\gamma_t)
  \le (1-t)f(\gamma_0)+t f(\gamma_1)
  -\frac12t(1-t)d(\gamma_0,\gamma_1)^2.
  \label{eq:cat0-strong-convexity-convention}
\end{equation}
On a CAT(0) space this inequality holds for
\(f(\cdot)=\tfrac12d(\cdot,R_0)^2\).  The AIRM SPD manifold is Hadamard, and
restriction to the isometric totally geodesic determinant-one fiber preserves
the same constant \citep{bhatia2007positive,lim2004best}.
A whitened tangent to the determinant-one hidden block has the form
\[
  Z=\begin{pmatrix}0&0\\0&Z_{22}\end{pmatrix},
  \qquad Z_{22}=Z_{22}^\top,
  \qquad \tr Z_{22}=0.
\]
The whitened negative gradient is the tangent projection of
\(\mathcal R(\Sigma)\) in \cref{eq:action-residual}.  Orthogonality to every
trace-free \(Z_{22}\) is exactly
\(\mathcal R_{22}(\Sigma)^\circ=0\).  Strong convexity makes this condition
necessary and sufficient.

Let \(D=\dai(\Sigma,\Sigma_\star)\), let
\(\xi=\operatorname{Log}_\Sigma(\Sigma_\star)\), and let \(V(\Sigma)\) be the restricted
negative gradient.  The first-order form of
\cref{eq:cat0-strong-convexity-convention} gives
\[
  f_B(\Sigma)-f_B(\Sigma_\star)
  \le \langle V(\Sigma),\xi\rangle-\frac12D^2
  \le\norm{V(\Sigma)}D-\frac12D^2,
\]
while the same inequality based at the minimizer, whose gradient vanishes,
gives
\[
  f_B(\Sigma)-f_B(\Sigma_\star)\ge\frac12D^2.
\]
Adding the two bounds yields
\[
  D^2\le\norm{V(\Sigma)}D,
\]
and hence, also for \(D=0\),
\[
  D\le\norm{V(\Sigma)}
  =\fro{\mathcal R_{22}(\Sigma)^\circ}.
\]
Finally,
\[
  f_B(\Sigma)-f_B(\Sigma_\star)
  \le \sup_{D\ge0}
  \left(\norm{V(\Sigma)}D-\frac12D^2\right)
  =\frac12\norm{V(\Sigma)}^2.
\]
This proves the exact constants in
\cref{eq:action-distance-certificate,eq:action-value-certificate}.
\end{proof}

\subsection{Proof of active reduction}

Let \(J\) be the orthogonal reflection that equals \(+I\) on \(W\) and
\(-I\) on \(W^\perp\).  Since the columns of \(A\) and \(B\) lie in \(W\),
\[
  JA=A,
  \qquad JB=B.
\]
If \(P\in\Pfiber(B)\), then \((JPJ)A=B\), and \(JPJ\) has the same
determinant and distance from \(I\) as \(P\).  Uniqueness of the canonical
controller yields
\[
  Js_A(B)J=s_A(B).
\]
Hence the cross blocks between \(W\) and \(W^\perp\) vanish:
\(s_A(B)=H_\star\oplus C_\star\).

For fixed feasible \(H\), the inactive log-eigenvalues \(z_i\) satisfy
\[
  \sum_{i=1}^qz_i=-\log\det H.
\]
Cauchy--Schwarz gives
\[
  \fro{\log C}^2=\sum_i z_i^2
  \ge\frac{(\log\det H)^2}{q},
\]
with equality exactly when all \(z_i\) are equal.  Thus
\[
  C_\star=(\det H_\star)^{-1/q}I_q,
\]
and substitution yields \cref{eq:active-controller-problem}.  Uniqueness
follows from uniqueness of \(s_A(B)\).

\section{Proofs for the Certified Solver and Recovery Laws}
\label{app:solver-recovery-proofs}

This appendix proves the algorithmic and recovery results in
\cref{sec:certified-solver,sec:multisecant-recovery}.
All gradients, exponential maps, and norms below
use the affine-invariant metric.

\subsection{Curvature and squared-distance comparison}

\begin{lemma}[AIRM curvature bound]
\label{lem:airm-curvature-bound}
Every sectional curvature of \(\SPD^d\), its determinant-one slice
\(\Pdet\), and every action fiber \(\Pfiber(B)\) lies in
\begin{equation}
  -\frac12\le \operatorname{sec}\le0.
  \label{eq:airm-curvature-bound}
\end{equation}
\end{lemma}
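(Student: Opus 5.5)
The plan has two parts. First, compute the sectional curvature of \(\SPD^d\) at the identity from its symmetric-space structure and bound it. Second, pass the bound to \(\Pdet\) and to the action fibers using total geodesy.

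For the ambient curvature we use the AIRM metric \(\ip{X}{Y}_P=\tr(P^{-1}XP^{-1}Y)\). Congruence \(P\mapsto GPG^\top\) acts transitively by isometries, so it suffices to work at \(P=I\), where tangent vectors are symmetric matrices with the Frobenius inner product. \(\SPD^d=\mathrm{GL}_d/O(d)\) is a symmetric space with \(\mathfrak p=\mathrm{Sym}_d\). Its curvature, with the normalization fixed by this metric, is
\[
  R(X,Y)Z=-\tfrac14[[X,Y],Z].
\]
This gives
\[
  \ip{R(X,Y)Y}{X}=-\tfrac14\fro{[X,Y]}^2 .
\]
Before using this, I will check the constant against a known case. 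For commuting \(X,Y\) the curvature must vanish, which it does. For the two-dimensional slice spanned by \(\operatorname{diag}(1,-1)/\sqrt2\) and the off-diagonal unit symmetric matrix, the result should be \(-1/2\). That value matches \(\mathscr P_2\cong\mathbb H^2\) with curvature \(-1/2\) under this metric, which is the source of the \(\sqrt2\) in \(\psi(D_0/\sqrt2)\).

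The bound \(\sec\le0\) is now immediate. For the lower bound, let \(X,Y\) be orthonormal symmetric matrices. Then \([X,Y]\) is skew, and we need
\[
  \fro{[X,Y]}^2\le 2 .
\]
This is the Böttcher–Wenzel inequality \(\fro{XY-YX}^2\le2\fro{X}^2\fro{Y}^2\), which gives \(\sec\ge-\tfrac14\cdot2=-\tfrac12\). I could simply cite it. For a self-contained argument, I would instead use the symmetric structure: diagonalize \(X=\operatorname{diag}(\lambda_i)\). Then
\[
  \fro{[X,Y]}^2=\sum_{i,j}(\lambda_i-\lambda_j)^2Y_{ij}^2 .
\]
Using \((\lambda_i-\lambda_j)^2\le2(\lambda_i^2+\lambda_j^2)\) and symmetry of \(Y\) gives at most \(4\sum_i\lambda_i^2\sum_jY_{ij}^2\). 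That only yields the weaker factor \(4\), so to reach \(2\) one must use orthogonality \(\tr(XY)=0\), or cite Böttcher–Wenzel for this real symmetric case. This constant is the main obstacle. Citing the known inequality is the safe route; the two-by-two example above shows the constant \(-1/2\) is attained.

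For the submanifolds, \(\Pdet\) is totally geodesic in \(\SPD^d\): \(\log\det\) is affine along geodesics, or equivalently \(\Pdet=\exp(\mathfrak{sl}\cap\mathrm{Sym})\) at \(I\) is a Lie-triple-system orbit. Step 4 of the proof of \cref{thm:action-bundle} shows that each fiber \(\Pfiber(B)\) is totally geodesic in \(\Pdet\). By the Gauss equation, a totally geodesic submanifold has zero second fundamental form, so its sectional curvatures on tangent two-planes equal the ambient ones. Hence both inherit \([-\tfrac12,0]\). Alternatively, each fiber is isometric to \(\mathscr P_n\), so applying the ambient computation with \(n\) in place of \(d\) gives the same bound.
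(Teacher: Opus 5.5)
Your proposal is correct and follows the paper's own proof. Both reduce to the identity by congruence, use \(R(X,Y)Z=-\tfrac14[[X,Y],Z]\), bound the commutator, and pass the bound to \(\Pdet\) and \(\Pfiber(B)\) by total geodesy.

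The one step you left open is easier than you think. The elementary route gives the factor \(2\) directly, with no orthogonality and no B\"ottcher--Wenzel: \([X,Y]_{ij}=(\lambda_i-\lambda_j)Y_{ij}\) vanishes on the diagonal, and for \(i\ne j\) one has \((\lambda_i-\lambda_j)^2\le2(\lambda_i^2+\lambda_j^2)\le2\fro{X}^2\). Hence \(\fro{[X,Y]}^2\le2\fro{X}^2\fro{Y}^2\), which is exactly the paper's estimate. Your factor \(4\) came from splitting \(\lambda_i^2+\lambda_j^2\) row by row and then bounding each row of \(Y\) by the full \(\fro{Y}^2\).
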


\begin{proof}
Congruence invariance reduces the calculation to the identity, where tangent
vectors are symmetric matrices and the metric is Frobenius.  The AIRM
curvature tensor is
\[
  R(X,Y)Z=-\frac14[[X,Y],Z],
\]
and hence
\[
  \operatorname{sec}(X,Y)
  =-\frac14\frac{\fro{[X,Y]}^2}
  {\fro X^2\fro Y^2-\ip X Y_F^2}\le0.
\]
To obtain the lower bound, replace \(Y\) by its component
\(\widetilde Y\) Frobenius-orthogonal to \(X\); this leaves the commutator
unchanged and turns the denominator into
\(\fro X^2\fro{\widetilde Y}^2\).  Orthogonally diagonalize
\(X=\operatorname{diag}(\lambda_i)\).  For symmetric
\(\widetilde Y=(y_{ij})\),
\[
\begin{aligned}
  \fro{[X,\widetilde Y]}^2
  &=2\sum_{i<j}(\lambda_i-\lambda_j)^2y_{ij}^2\\
  &\le4\fro X^2\sum_{i<j}y_{ij}^2
  \le2\fro X^2\fro{\widetilde Y}^2.
\end{aligned}
\]
Substitution gives \(\operatorname{sec}\ge-1/2\).  The determinant-one
slice and the action fibers are totally geodesic, so their intrinsic
curvatures are restrictions of the ambient curvature tensor.
\end{proof}

\begin{lemma}[Squared-distance Hessian comparison]
\label{lem:squared-distance-comparison}
Let \((\mathcal M,g)\) be a finite-dimensional Hadamard manifold with
\(-\kappa\le\operatorname{sec}\le0\).  For fixed \(z\), let
\(h(x)=\tfrac12d(z,x)^2\).  Then
\begin{equation}
  g_x\preceq\operatorname{Hess}h(x)
  \preceq\psi(\sqrt\kappa R)g_x
  \quad\text{whenever }d(z,x)\le R,
  \label{eq:squared-distance-comparison}
\end{equation}
where \(\psi(s)=s\coth s\) and \(\psi(0)=1\).  In particular, \(h\) is
globally \(1\)-strongly geodesically convex.
\end{lemma}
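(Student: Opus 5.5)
The plan is to reduce the claim to a Jacobi-field comparison along the geodesic from \(z\) to \(x\), the standard route to Hessian bounds for distance functions. Fix \(x\ne z\) with \(D=d(z,x)\le R\) and let \(\gamma:[0,D]\to\mathcal M\) be the unit-speed geodesic from \(z\) to \(x\); on a Hadamard manifold it is unique and \(\operatorname{Exp}_z\) is a diffeomorphism, so \(h\) is smooth. Write \(r=d(z,\cdot)\), so that \(h=\tfrac12r^2\) and
\[
  \operatorname{Hess}h=dr\otimes dr+r\operatorname{Hess}r .
\]
In the radial direction \(\dot\gamma(D)\), \(\operatorname{Hess}r\) vanishes, because \(r\) is affine along \(\gamma\). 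So \(\operatorname{Hess}h(\dot\gamma,\dot\gamma)=1\), which lies in \([1,\psi]\) since \(\psi\ge1\). It therefore remains to bound \(D\operatorname{Hess}r(v,v)\) for unit \(v\perp\dot\gamma(D)\). At \(x=z\) the Hessian equals \(g_z\) directly, and \(\psi(0)=1\).

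For a transverse unit vector \(v\), let \(J\) be the Jacobi field along \(\gamma\) with \(J(0)=0\) and \(J(D)=v\). Then
\[
  \operatorname{Hess}r(v,v)=\langle J'(D),J(D)\rangle ,
\]
the second-variation (index-form) identity, which uses that there are no conjugate points. Thus the quantity to bound is \(D\,\operatorname{Hess}r(v,v)=D\,I_D(J,J)\).

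For the \textbf{lower bound}, \(\sec\le0\) makes the index form at least \(\int_0^D|J'|^2\). Since \(|J|\) vanishes at \(0\), the estimate \(1=|J(D)|\le\int_0^D|J'|\) together with Cauchy--Schwarz gives \(\int_0^D|J'|^2\ge1/D\). Hence \(D\operatorname{Hess}r(v,v)\ge1\), and \(\operatorname{Hess}h\succeq g\).

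For the \textbf{upper bound}, I will use the index lemma: on a manifold without conjugate points, the Jacobi field minimizes the index form among fields with the same endpoint values. As a competitor, take \(W(t)=\frac{\sinh(\sqrt\kappa t)}{\sinh(\sqrt\kappa D)}E(t)\), where \(E\) is the parallel transport of \(v\). Using \(-\langle R(W,\dot\gamma)\dot\gamma,W\rangle\le\kappa|W|^2\) (from \(\sec\ge-\kappa\)),
\[
  I_D(W,W)=\int_0^D\bigl(|W'|^2-\langle R(W,\dot\gamma)\dot\gamma,W\rangle\bigr)\,dt
  \le\int_0^D(f'^2+\kappa f^2)\,dt=f f'\big|_0^D=\sqrt\kappa\coth(\sqrt\kappa D),
\]
where \(f\) is the scalar sinh profile; the middle equality integrates by parts using \(f''=\kappa f\). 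So \(D\operatorname{Hess}r(v,v)\le\psi(\sqrt\kappa D)\). When \(\kappa=0\), the linear profile \(W(t)=(t/D)E(t)\) gives the bound \(1\).

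Two small points then finish the proof. First, the mixed terms \(\operatorname{Hess}h(\dot\gamma,v)\) vanish, since \(\nabla r\) is a unit field and hence \(\operatorname{Hess}r(\dot\gamma,\cdot)=0\); so the radial and transverse blocks combine into the stated two-sided bound. Second, \(\psi\) is nondecreasing, so the bound at radius \(D\) implies the bound at \(R\). Global \(1\)-strong convexity follows from the lower Hessian bound by integrating along geodesics.

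The main obstacle is the upper bound, specifically getting the curvature sign right when applying the index lemma with the lower curvature bound. I would check this against the model space of constant curvature \(-\kappa\), where equality holds.
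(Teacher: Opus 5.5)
Your proposal is correct and follows essentially the same route as the paper: the radial/transverse splitting $\operatorname{Hess}h(v,v)=a^2+r\operatorname{Hess}r(v_\perp,v_\perp)$, the index-form lower bound $\int_0^r|J'|^2\ge 1/r$ from $\sec\le0$ and Cauchy--Schwarz, and the index-lemma comparison with the $\sinh$ profile under $\sec\ge-\kappa$, followed by monotonicity of $\psi$ and integration of the lower bound to get strong convexity. Your explicit check that the mixed terms vanish, and your direct evaluation $\operatorname{Hess}h(z)=g_z$, spell out details the paper leaves implicit; the paper treats $x=z$ by continuity instead.
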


\begin{proof}
For \(x\ne z\), put \(r=d(z,x)\), let \(T\) be the terminal unit tangent
of the geodesic from \(z\) to \(x\), and decompose
\(v=aT+v_\perp\).  The radial Hessian formula gives
\[
  \operatorname{Hess}h(v,v)
  =a^2+r\operatorname{Hess}r(v_\perp,v_\perp).
\]
Let \(J\) be the Jacobi field with \(J(0)=0\) and
\(J(r)=v_\perp\).  Nonpositive curvature and Cauchy--Schwarz applied after
parallel transport yield
\[
  \operatorname{Hess}r(v_\perp,v_\perp)=I(J,J)
  \ge\int_0^r\norm{D_tJ}^2\,\dd t
  \ge r^{-1}\norm{v_\perp}^2.
\]
For the reverse bound, compare \(J\) with
\[
  V(t)=\frac{\sinh(\sqrt\kappa t)}
  {\sinh(\sqrt\kappa r)}P_tv_\perp,
\]
using \(V(t)=(t/r)P_tv_\perp\) when \(\kappa=0\).  The index lemma and the
curvature lower bound give
\[
  I(J,J)\le I(V,V)
  \le\sqrt\kappa\coth(\sqrt\kappa r)\norm{v_\perp}^2.
\]
Therefore
\[
  \norm v^2\le\operatorname{Hess}h(v,v)
  \le\psi(\sqrt\kappa r)\norm v^2.
\]
The formula extends continuously to \(x=z\), where
\(\operatorname{Hess}h=g_z\).  Monotonicity of \(\psi\) proves the stated
radius bound, and integration of the lower Hessian bound along geodesics
proves strong convexity.
\end{proof}

\begin{proof}[Proof of \cref{thm:canonical-fiber-engine}]
Coercivity and closedness give a minimizer, and strong convexity makes it
unique.  Write \(G_k=\operatorname{grad}F(x_k)\),
\(g_k=\norm{G_k}\), and
\[
  \gamma_k(t)=\operatorname{Exp}_{x_k}(-tG_k),
  \qquad 0\le t\le L_0^{-1}.
\]
If \(g_k=0\), strong convexity gives \(x_k=x_\star\).  Otherwise
\((F\circ\gamma_k)'(0)=-g_k^2<0\).  If this update segment left the current
sublevel, continuity would give a first return
\(t_\star\in(0,L_0^{-1}]\).  Before that return the Hessian is bounded by
\(L_0g\), so the integral Taylor formula would give
\[
\begin{aligned}
  F(\gamma_k(t_\star))
  &\le F(x_k)-t_\star g_k^2
       +\frac{L_0t_\star^2}{2}g_k^2\\
  &\le F(x_k)-\frac{t_\star}{2}g_k^2<F(x_k),
\end{aligned}
\]
contradicting the return.  Hence the complete update stays in the current
sublevel, and the same inequality at \(t=L_0^{-1}\) gives
\[
  F(x_{k+1})\le F(x_k)-\frac{g_k^2}{2L_0}.
\]

Let \(\Delta(x)=F(x)-F(x_\star)\),
\(d=d(x,x_\star)\), and
\(g=\norm{\operatorname{grad}F(x)}\).  Strong convexity at \(x\) and at
\(x_\star\) gives
\[
  \frac{\mu}{2}d^2
  \le\Delta(x)
  \le gd-\frac{\mu}{2}d^2
  \le\frac{g^2}{2\mu}.
\]
The first two expressions also imply \(d\le g/\mu\).  Thus
\(g^2\ge2\mu\Delta\), and the descent inequality yields
\[
  \Delta_{k+1}
  \le\left(1-\frac{\mu}{L_0}\right)\Delta_k.
\]
Iteration proves the value rate; the two preceding inequalities prove the
distance and posterior bounds.  Since
\(g_k^2/(2L_0)\le\Delta_k\), they also give the gradient rate.  Exact
inversion of \(q_{\mu,L}^k\) gives the stated complexity.  If
\(L_0=\mu\), the recurrence gives \(\Delta_1=0\).

For \(F(x)=\mu\|x\|^2/2\) on a Euclidean line, both posterior inequalities
and the \(L_0=\mu\) one-step statement hold with equality.
\end{proof}

For the fiber objective \(F_B(P)=\tfrac12\dai(I,P)^2\),
\cref{lem:airm-curvature-bound,lem:squared-distance-comparison} imply
\begin{equation}
  \operatorname{Hess}F_B\succeq g,
  \qquad
  \operatorname{Hess}F_B(P)\preceq
  \psi(D/\sqrt2)g_P
  \quad\text{if }\dai(I,P)\le D.
  \label{eq:fiber-hessian-bounds}
\end{equation}
The following action instance proves that the constant cannot be decreased
uniformly over dimensions and admissible actions.

\begin{proposition}[Action-family attainment of the radius majorant]
\label{prop:sharp-action-majorant}
Let \(d=3\), \(m=1\), and \(A=B=e_1\).  The action fiber is
\[
  \{\operatorname{diag}(1,\Sigma):\Sigma\in\mathscr P_2\},
\]
its canonical controller is \(I_3\), and for every \(D>0\) there are a point
at AIRM radius \(D\) and a unit fiber tangent \(v\) such that
\begin{equation}
  \operatorname{Hess}F_B(v,v)=\psi(D/\sqrt2).
  \label{eq:sharp-action-majorant-attainment}
\end{equation}
Hence \(\psi(D/\sqrt2)\) is the sharp dimension-uniform radius-only Hessian
majorant within the action family.  Along a radial geodesic in the same fiber,
both posterior inequalities in
\cref{eq:solver-distance-certificate,eq:solver-gap-certificate} hold with
equality.
\end{proposition}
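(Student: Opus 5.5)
The plan is to reduce everything to a two-dimensional constant-curvature computation, where the comparison inequality of \cref{lem:squared-distance-comparison} becomes an equality.

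\emph{The fiber and its canonical controller.} With \(A=B=e_1\), the constraint \(Pe_1=e_1\) together with symmetry forces the first row and column of \(P\) to equal \(e_1\). Hence \(P=\operatorname{diag}(1,\Sigma)\) with \(\det\Sigma=1\). This agrees with \cref{thm:action-bundle} for \(Q=I\), \(M=1\), \(N=0\), \(T=0\), \(\rho=1\). Since \(\fro{\log P}=\fro{\log\Sigma}\) and \(I_2\in\mathscr P_2\), the minimizer of \(r\) on the fiber is \(\Sigma=I_2\). So the canonical controller is \(I_3\).

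\emph{Exact curvature of the fiber.} By \cref{thm:action-bundle}, the fiber is totally geodesic and isometric to \(\mathscr P_2\), a two-dimensional Hadamard manifold. Homogeneity under congruence by \(\mathrm{SL}_2\) makes its curvature constant, so one evaluation suffices. I would use the formula from the proof of \cref{lem:airm-curvature-bound} at the identity. Take the orthonormal trace-free pair \(X=\operatorname{diag}(1,-1)/\sqrt2\) and \(Y=(e_1e_2^\top+e_2e_1^\top)/\sqrt2\). Then \([X,Y]=e_1e_2^\top-e_2e_1^\top\) has squared Frobenius norm \(2\), which gives \(\operatorname{sec}=-\tfrac14\cdot 2=-\tfrac12\). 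Thus the fiber is a hyperbolic plane of curvature \(-\kappa\) with \(\kappa=1/2\). In particular, the lower bound of \cref{lem:airm-curvature-bound} is attained.

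\emph{Attainment of the majorant.} Fix \(D>0\) and take \(P_D=\operatorname{diag}(1,e^{a},e^{-a})\) with \(a=D/\sqrt2\), so that \(\dai(I,P_D)=\sqrt2\,a=D\). In constant curvature \(-\kappa\), the index-form argument in the proof of \cref{lem:squared-distance-comparison} is sharp. The Jacobi field vanishing at the centre is exactly the comparison field \(V(t)=\sinh(\sqrt\kappa t)/\sinh(\sqrt\kappa D)\,P_tv_\perp\), so \(I(J,J)=\sqrt\kappa\coth(\sqrt\kappa D)\norm{v_\perp}^2\). For a unit fiber tangent \(v\) orthogonal to the radial direction (whitened off-diagonal direction \(Y\)), the radial Hessian formula then gives
\[
  \operatorname{Hess}F_B(v,v)=D\sqrt\kappa\coth(\sqrt\kappa D)=\psi(D/\sqrt2).
\]
As a cross-check, I would verify this with the standard SPD formula \(\operatorname{Hess}h(Z,Z)=\sum_{ij}Z_{ij}^2\,\tfrac{\lambda_i-\lambda_j}{2}\coth\tfrac{\lambda_i-\lambda_j}{2}\). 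Here \(\lambda_2-\lambda_3=2a\), which gives the same value. The upper bound \eqref{eq:fiber-hessian-bounds} holds uniformly, and this instance lives inside the action family for every \(d\ge3\) by padding. Hence no smaller radius-only constant is possible.

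\emph{Posterior equalities.} Along the radial geodesic \(\Sigma_t=\operatorname{diag}(e^{t},e^{-t})\), with arclength \(s=\sqrt2|t|\), we have \(f_B=s^2/2\). The gradient is radial with norm \(s\). By \eqref{eq:residual-gradient-identity}, \(\fro{W_B}=s\), which equals \(\dai(\Sigma_t,I)\) and gives equality in the distance certificate. Likewise \(f_B-f_B(\Sigma_\star)=s^2/2=\tfrac12\fro{W_B}^2\), which gives equality in the gap certificate.

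The main obstacle is justifying exact equality in the Hessian, rather than only the comparison bound. Routing the argument through constant curvature \(-1/2\) avoids computing the matrix-logarithm derivative directly. The explicit divided-difference formula serves only as the confirming check.
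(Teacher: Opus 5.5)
Your proposal is correct and follows the paper's proof. Both arguments read off the fiber from \(M=1\), \(N=0\), \(\rho=1\) and note that \(\mathscr P_2\) has constant curvature \(-1/2\) (equality in the commutator bound), so the transverse Hessian eigenvalue of \(\tfrac12\dai(I,\cdot)^2\) is \(D\sqrt\kappa\coth(D\sqrt\kappa)\) with \(\kappa=1/2\). Your explicit point \(\operatorname{diag}(1,e^{a},e^{-a})\), the divided-difference cross-check, the padding remark, and the radial check of the two posterior equalities are all correct; the paper's proof does not address the posterior equalities, and here they actually hold at every point of the fiber, because \(f_B=\tfrac12\dai(I,\cdot)^2\) is minimized at \(I\).
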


\begin{proof}
Here \(M=1\), \(N=0\), and \(\rho=1\), so the displayed fiber and canonical
controller follow directly from \cref{eq:action-bundle-map}.  The
determinant-one surface \(\mathscr P_2\) is a two-dimensional space of
constant sectional curvature \(-1/2\), as equality holds in the commutator
bound of \cref{lem:airm-curvature-bound}.  On a constant-curvature
\(-\kappa\) plane, the transverse eigenvalue of the Hessian of
\(\tfrac12d(I,\cdot)^2\) at radius \(D\) is
\(D\sqrt\kappa\coth(D\sqrt\kappa)\).  Taking \(\kappa=1/2\) gives
\eqref{eq:sharp-action-majorant-attainment}.
\end{proof}

\subsection{Residual gradient and exact global convergence}

\begin{proof}[Proof of \cref{eq:residual-gradient-identity}]
On the ambient SPD manifold, for
\(\widetilde f(Q)=\tfrac12\dai(R_0,Q)^2\),
\[
  -\operatorname{grad}\widetilde f(Q)=\operatorname{Log}_Q(R_0).
\]
Whitening at \(Q\) turns this tangent into
\(\log(Q^{-1/2}R_0Q^{-1/2})\).  At
\(Q=\mathcal Q_\Sigma\) it is precisely \(\mathcal R(\Sigma)\).
The whitened tangent space of the hidden determinant-one block consists of
\[
  \begin{pmatrix}0&0\\0&Z\end{pmatrix},
  \qquad Z=Z^\top,\quad\tr Z=0.
\]
Since whitening converts the AIRM inner product to the Frobenius inner
product, orthogonal projection retains the \(22\) block and removes its
trace.  Thus the whitened reduced negative gradient is
\(W_B(\Sigma)=\mathcal R_{22}(\Sigma)^\circ\), which proves both identities
in \cref{eq:residual-gradient-identity}.
\end{proof}

\begin{proof}[Proof of \cref{thm:certified-action-solver}]
Write \(f_k=f_B(\Sigma_k)\),
\(g_k=\fro{W_B(\Sigma_k)}\), and
\(D_k=\dai(R_0,\mathcal Q_{\Sigma_k})=\sqrt{2f_k}\).  Trace freeness of
\(W_B(\Sigma_k)\) preserves determinant one, while the exponential preserves
positive definiteness.  The action chart consequently keeps every controller
on \(\Pfiber(B)\).

Let \(\gamma_k(t)\) be the negative-gradient geodesic, parameterized for
\(0\le t\le L_0^{-1}\), and put
\(\varphi_k=f_B\circ\gamma_k\).  If \(g_k=0\), the iterate is optimal.
Otherwise \(\varphi_k'(0)=-g_k^2<0\).  If the update segment left the
current objective sublevel, there would be a first return
\(t_\star\in(0,L_0^{-1}]\).  Before that return,
\(D(\gamma_k(t))\le D_k\le D_0\), so
\eqref{eq:fiber-hessian-bounds} bounds the Hessian by \(L_0g\).  The
integral Taylor formula would then imply
\[
  \varphi_k(t_\star)
  \le\varphi_k(0)-t_\star g_k^2
      +\frac{L_0t_\star^2}{2}g_k^2
  <\varphi_k(0),
\]
a contradiction.  Thus the complete update segment stays in the current
sublevel.  Applying the same Taylor bound at \(L_0^{-1}\) gives
\[
  f_{k+1}\le f_k-\frac1{L_0}g_k^2
  +\frac1{2L_0}g_k^2
  =f_k-\frac1{2L_0}g_k^2.
\]
Thus \(D_{k+1}\le D_k\), closing the induction and proving
\cref{eq:solver-descent} globally.

Let \(\Delta(\Sigma)=f_B(\Sigma)-f_B(\Sigma_\star)\).  Strong convexity
and Cauchy--Schwarz give, with
\(d=\dai(\Sigma,\Sigma_\star)\) and
\(g=\norm{\operatorname{grad}f_B(\Sigma)}\),
\begin{equation}
  \Delta(\Sigma)\le gd-\frac12d^2\le\frac12g^2.
  \label{eq:proof-pl}
\end{equation}
Hence \(g^2\ge2\Delta\), and the descent inequality yields
\[
  \Delta_{k+1}\le(1-L_0^{-1})\Delta_k=q_0\Delta_k.
\]
Since \(\Delta_0\le f_0=D_0^2/2\), this proves
\cref{eq:solver-value-rate}.  Strong convexity based at the minimizer gives
\(d^2\le2\Delta\), proving \cref{eq:solver-distance-rate}.  Combining
\(f_{k+1}\ge f_B(\Sigma_\star)\) with the descent inequality gives
\(g_k^2\le2L_0\Delta_k\), proving
\cref{eq:solver-residual-rate}.

The two strong-convexity inequalities also give
\(\Delta\ge d^2/2\).  Together with the first bound in
\eqref{eq:proof-pl}, this implies \(d\le g\), including the case \(d=0\),
while the second bound in \eqref{eq:proof-pl} gives
\(\Delta\le g^2/2\).  These are
\cref{eq:solver-distance-certificate,eq:solver-gap-certificate}.
Finally, for \(D_0>0\),
\(q_0^k=e^{-k\chi_0}\).  Solving the three rate bounds exactly for \(k\)
gives
\cref{eq:solver-value-complexity,eq:solver-distance-complexity,eq:solver-residual-complexity}.
The case \(D_0=0\) is immediate.
\end{proof}

\subsection{Active and inexact implementations}

\begin{proof}[Proof of \cref{cor:active-core-solver}]
Retain \(A=UC\) from \cref{sec:action-bundle}.  Choose \(V_a\) so that
\(E=[U,V_a]\) is an orthonormal basis of \(\mathcal W_A\).  Then
\[
  BC^{-1}=UM+V_aN_a,\qquad
  T_a=N_aM^{-1},\qquad
  L_a=\begin{pmatrix}I&0\\T_a&I\end{pmatrix},
  \qquad
  R_{0,a}=L_a^{-1}L_a^{-\top}.
\]
Complete \(E\) by \(Z\) when \(q>0\).  In the basis \([U,V_a,Z]\), the
full action coordinate satisfies
\[
  N=\begin{pmatrix}N_a\\0\end{pmatrix},
  \qquad
  L=L_a\oplus I_q,
  \qquad
  R_0=R_{0,a}\oplus I_q.
\]
The initializer in the corollary is therefore the full hidden state
\[
  \Sigma_0=I_s\oplus I_q.
\]

More generally, suppose
\[
  \Sigma=\Sigma_a\oplus\beta I_q,
  \qquad
  \det\Sigma_a\,\beta^q=1.
\]
Then
\[
  \mathcal Q_a=\operatorname{diag}(M,\rho\Sigma_a),
  \qquad
  \mathcal Q_\Sigma
  =\mathcal Q_a\oplus(\rho\beta)I_q,
\]
and block functional calculus gives
\[
  \mathcal R_a
  =\log(\mathcal Q_a^{-1/2}R_{0,a}\mathcal Q_a^{-1/2}),
  \qquad
  \mathcal R(\Sigma)
  =\mathcal R_a\oplus r_cI_q,
  \qquad
  r_c=-\log(\rho\beta).
\]
The hidden trace-free projection subtracts the common mean
\[
  \tau=\frac{\tr\mathcal R_{22}^a+qr_c}{s+q}.
\]
Consequently,
\begin{equation}
  W_B(\Sigma)=W_a\oplus w_cI_q,
  \quad
  W_a=\mathcal R_{22}^a-\tau I_s,
  \quad
  w_c=r_c-\tau,
  \qquad
  \fro{W_B(\Sigma)}^2=\fro{W_a}^2+qw_c^2.
\label{eq:active-residual-splitting}
\end{equation}
The full exponential update now splits exactly into
\begin{equation}
  \Sigma_a^+
  =\Sigma_a^{1/2}e^{W_a/L_0}\Sigma_a^{1/2},
  \qquad
  \beta^+=\beta e^{w_c/L_0}.
\label{eq:active-solver-update-proof}
\end{equation}
Since \(\tr W_a+qw_c=0\), this update preserves
\(\det\Sigma_a\,\beta^q=1\).  The action-bundle map then preserves
positive definiteness, determinant one, and \(PA=B\).  This proves
\eqref{eq:active-residual-splitting} and
\eqref{eq:active-solver-update-proof}; the
stopping certificate follows from
\cref{eq:solver-distance-certificate}.

A rank-revealing thin QR of the \(d\times2m\) input costs \(O(dm^2)\).
All remaining preprocessing acts on matrices of order at most \(r\), costing
\(O(r^3)\).  Each iteration uses one \(r\times r\) logarithm and inverse
square root and one \(s\times s\) exponential, hence \(O(r^3)\) arithmetic
and \(O(r^2)\) working memory.  Combining this cost with
\cref{eq:solver-residual-complexity} gives
\cref{eq:active-total-complexity}.  The representation
\[
  Px=cx+E(H-cI_r)E^\top x
\]
costs \(O(dr+r^2)\) per application.  If \(q=0\), every complement term is
absent and the same construction is the full solver.  If \(s=0\), the
determinant condition forces \(\beta=1\), leaving no variable.
\end{proof}

\begin{proof}[Proof of \cref{cor:inexact-action-solver}]
Put \(W_k=W_B(\Sigma_k)\), \(g_k=\fro{W_k}\), and
\(\widetilde W_k=W_k+E_k\).  First,
\cref{eq:computable-relative-error-test} and \(b_k\ge\fro{E_k}\) give
\[
  \fro{E_k}
  \le\frac{\delta}{1+\delta}
      \left(\fro{W_k}+\fro{E_k}\right),
\]
so rearrangement proves \cref{eq:relative-residual-error}.  Consequently,
\[
  (1-\delta)g_k\le\fro{\widetilde W_k}
  \le(1+\delta)g_k,
\qquad
  \ip{-W_k}{\widetilde W_k}_F
  \le-(1-\delta)g_k^2.
\]
If \(g_k=0\), the relative error forces a zero update.  Otherwise consider
the inexact update geodesic.  If it left the current objective sublevel,
there would be a first return \(t_\star\in(0,\eta]\).  Before that return,
the identity radius is at most \(D_k\le D_0\le\widehat D_0\), so the Hessian
is bounded by \(\widehat L_0g\).  The integral Taylor bound would give
\[
  f_B(\gamma_k(t_\star))
  \le f_k-t_\star(1-\delta)g_k^2
      +\frac{\widehat L_0t_\star^2}{2}(1+\delta)^2g_k^2
  <f_k,
\]
contradicting the return.  Hence the whole segment remains in the current
sublevel.  The descent lemma at \(t=\eta\) implies
\[
  f_{k+1}
  \le f_k-\eta(1-\delta)g_k^2
  +\frac{\widehat L_0\eta^2}{2}(1+\delta)^2g_k^2
  =f_k-\frac{c_\delta}{2\widehat L_0}g_k^2.
\]
Hence \(D_{k+1}\le D_k\).  Applying the global PL inequality from the exact
proof gives
\[
  \Delta_{k+1}\le\widehat q_\delta\Delta_k.
\]
Iteration and
\(\Delta_0\le D_0^2/2\le\widehat D_0^2/2\) prove
\cref{eq:inexact-value-rate}; strong convexity gives
\cref{eq:inexact-distance-rate}.  The descent bound and
\(f_{k+1}\ge f_B(\Sigma_\star)\) imply
\[
  g_k^2\le\frac{2\widehat L_0}{c_\delta}\Delta_k.
\]
Together with \(\fro{\widetilde W_k}\le(1+\delta)g_k\), the value rate, and
\(\sqrt{c_\delta}=(1-\delta)/(1+\delta)\), this proves
\cref{eq:inexact-residual-rate}.  Finally, the exact posterior certificates and
\(g_k\le\fro{\widetilde W_k}/(1-\delta)\) give
\cref{eq:inexact-certificates}.
\end{proof}

\subsection{Nested recovery}

\begin{proof}[Proof of \cref{thm:nested-multisecant}]
For every \(k\),
\[
  A_k^\top B_k
  =c_H(S^{(k)})^\top HS^{(k)}\succ0,
\]
so \(\mathscr C_k\) is nonempty, closed, and totally geodesic by
\cref{thm:action-bundle}.  Also
\(P_\star A_k=c_HH^{-1}HS^{(k)}=B_k\), so
\(P_\star\in\mathscr C_k\).  Nested column spaces imply
\(\mathscr C_{k+1}\subset\mathscr C_k\).  Metric projection onto a
nonempty closed geodesically convex subset of a Hadamard manifold exists
uniquely, proving that every update is well defined and retains every earlier
action.

For completeness, if \(y=\operatorname{Proj}_C(x)\) and \(z\in C\), first
variation along the geodesic from \(y\) to \(z\) gives
\(\ip{\operatorname{Log}_y(x)}{\operatorname{Log}_y(z)}\le0\).  Since the
exponential map is distance expanding,
\[
\begin{aligned}
  d(x,z)^2
  &\ge\norm{\operatorname{Log}_y(x)-\operatorname{Log}_y(z)}^2\\
  &\ge d(x,y)^2+d(y,z)^2.
\end{aligned}
\]
Apply this with \(x=P_{k-1}\), \(y=P_k\), \(z=P_\star\), and
\(C=\mathscr C_k\) to obtain
\cref{eq:multisecant-pythagorean}.  Summation telescopes to
\cref{eq:multisecant-energy}; Cauchy--Schwarz then gives
\cref{eq:multisecant-length}.

If \(r_K=d-1\), the bundle fiber is diffeomorphic to
\(\mathscr P_{d-r_K}=\mathscr P_1=\{1\}\), so
\(\mathscr C_K=\{P_\star\}\).  If \(r\le d-2\), then
\(n=d-r\ge2\) and the fiber is diffeomorphic to \(\mathscr P_n\), whose
dimension \(n(n+1)/2-1\) is positive; it therefore contains infinitely many
completions.  This proves the sharp threshold and the stated block count.

Finally, for the \(k\)-th projection set
\[
  \widetilde P=P_{k-1}^{-1/2}PP_{k-1}^{-1/2},\quad
  \widetilde A_k=P_{k-1}^{1/2}A_k,\quad
  \widetilde B_k=P_{k-1}^{-1/2}B_k.
\]
Then \(PA_k=B_k\) is equivalent to
\(\widetilde P\widetilde A_k=\widetilde B_k\), determinant one is
preserved, and congruence invariance turns the objective into
\(\dai(I,\widetilde P)\).  Thus every outer update is an identity-reference
action problem.  The exact projection is the limit covered by
\cref{thm:certified-action-solver}; \cref{cor:inexact-action-solver} certifies
finite inner approximations but is not used in the exact Pythagorean identities.
\end{proof}

\section{Closest-Work Comparison}
\label{app:closest-comparison}

The solver theorem combines standard Hadamard first-order principles with an
action-specific geometric construction.  \Cref{tab:theorem-level-comparison}
separates those responsibilities.  It compares theorem outputs, not algorithms
with different feasible models.

\begin{table}[ht]
\centering
\begingroup
\setlength{\tabcolsep}{5pt}
\footnotesize
\begin{tabular}{@{}p{0.22\textwidth}p{0.34\textwidth}p{0.38\textwidth}@{}}
\toprule
Work & Established output & Action-specific distinction \\
\midrule
Hadamard first-order methods \citep{zhang2016firstorder}
& Global complexity for smooth or nonsmooth geodesically convex objectives,
including strongly convex and curvature-dependent regimes
& No parameterization of \(PA=B\), logarithmic action residual, or active
controller realization \\

Hadamard convex analysis \citep{bacak2014convex}
& Projection, proximal, and convex-analytic foundations on closed geodesically
convex sets
& Supplies abstract projection principles; does not construct or solve the
action fiber \\

Definite matrix equations and geodesic SPD projection
\citep{tian2013hermitian,lim2004best,tumpach2024totally,sra2015conic}
& Algebraic definite-solution identities, projection results, total-geodesy
criteria, and algorithms for geodesic SPD or conic models
& No global determinant-one action bundle together with its residual,
active realization, and action posterior certificate \\

Least-change and multi-secant updates
\citep{dennis1979leastchange,schnabel1983multiple,fletcher1991variational,
gao2018block,boutet2022grouped,kanamori2013bregman}
& Update formulas and convergence properties for secant equations under
method-specific norms or divergences
& Do not parameterize the complete AIRM action fiber or prove the normalized
rank-\(d-1\) identification threshold \\

This paper
& For \(PA=B\), \(P\succ0\), \(\det P=1\): global action bundle, total
geodesy, analytic canonical section, exact
residual, global rate, posterior certificates, and active total complexity
& Specializes the general principles into an exact action solver with
posterior certificates and conditional residual-error propagation \\
\bottomrule
\end{tabular}
\endgroup
\caption{Theorem-level comparison with the closest geometric optimization and
multi-secant baselines.}
\label{tab:theorem-level-comparison}
\end{table}

The conditional inexact corollary assumes rigorous matrix-function error and
radius majorants.  It propagates those certificates through the geometric
solver; unlike a certified numerical linear-algebra package, it does not
construct the majorants or establish bit complexity.

\end{document}